\newcommand{\bm}[1]{\mbox{\boldmath $#1$}}
\DeclareFontFamily{OT1}{rsfs}{}
\DeclareFontShape{OT1}{rsfs}{m}{n}{ <-7> rsfs5 <7-10> rsfs7 <10->
rsfs10}{} \DeclareMathAlphabet{\mycal}{OT1}{rsfs}{m}{n}
\newcommand{\mnotex}[1]
{\protect{\stepcounter{mnotecount}}$^{\mbox{\footnotesize $\bullet$\themnotecount}}$ 
\marginpar{
\raggedright\tiny\em
$\!\!\!\!\!\!\,\bullet $\themnotecount: #1} }
\newtheorem{Tma}{Theorem}
\newtheorem{Prop}{Proposition}
\newtheorem{Def}{Definition}
\newtheorem{Lem}{Lemma}
\newtheorem{Cor}{Corollary}
\newtheorem{remark}{Remark}
\theoremstyle{remark} \newtheorem*{Obs}{Remark}
\def\Journal#1#2#3#4#5#6{#1, ``#2'', {\em #3} {\bf #4}, #5 (#6).}
\def\JGP{\em J. Geom. Phys.}
\def\CQG{\em Class. Quantum Grav.}
\def\GRG{\em Gen. Rel. Grav.}
\def\CMP{\em Commun. Math. Phys.}
\def\PAMS{\em Proc. Amer. Math. Soc.}
\def\TAMS{\em Trans. Amer. Math. Soc.}
\def\AM{\em Ann. Math.}
\def\PJM{\em Pacific J. Math.}
\def\AHP{\em Annals Henri Poincar\'e}
\def\MM{\em Manuscripta Mathematica}
\def\GD{\em Geom. Dedicata}
\def\MJM{\em Mediterr. J. Math.}
\def\KJM{\em Kragujevac Journal of Mathematics}
\def\HPA{\em Helvetica Physica Acta} 
\def\JMPA{\em J. Math. pures et appl.}
\def\PRSE{\em Proc. Roy. Soc. Edinburgh A.}
\def\PSPM{\em Proc. Symp. Pure Math.}
\def\IM{\em Invent. Math.}
\def\RACCIEX{\em Revista de la Real Academia de Ciencias Exactas, Físicas y Naturales. Serie A. Matemáticas}
\def\DGA{\em Differential Geom. Appl.}
\def\SJM{\em Soochow J. Math.}
\def\gb{\overline{g}}
\def\gone{\tilde{g}}
\def\conb{\overline{\nabla}}
\def\conone{\tilde{\nabla}}
\def\norone{\tilde{N}}
\def\gtwo{\hat{g}}
\def\Mb{\overline{M}}
\def\gbase{g_B}
\def\conbase{\nabla^B}
\def\confiber{\nabla^F}
\def\cons{\nabla}
\def\vfield{\mathfrak{X}}
\def\vconf{K}
\def\1form{\omega}
\def\der{\partial_{t}}
\def\div{\mathrm{div}\,}
\def\trs{\mathrm{trace}\,}
\def\sff{\mathrm{II}}
\def\mcurvs{H}
\def\vecs{\mathfrak{T}}
\def\Hyp{\Sigma}
\def\fun{\rho}
\begin{document}

\title{Spacelike hypersurfaces in twisted product spacetimes with complete fiber and Calabi-Bernstein-type problems}

\author{Alberto Soria \\
Higher Technical School of Telecommunications Engineering, \\
Universidad Polit\'ecnica de Madrid  \\ 	
Av. Complutense 30, 28040 Madrid, Spain \\	
alberto.soria@upm.es}

\date{\today}

\maketitle

\begin{abstract}
In this article spacelike hypersurfaces immersed in twisted product spacetimes $I\times_f F$ with complete fiber are studied.
Several conditions ensuring global hyperbolicity are presented, as well as   
a relation that needs to hold on each spacelike hypersurface 
in $I\times_f F$ for it to be a simple warped product.
When the fiber is assumed to be closed (compact and without boundary) and the ambient spacetime has a suitable expanding behaviour, non-existence results for constant mean curvature hypersurfaces are obtained. Under the same hypothesis,  
a characterization of compact maximal hypersurfaces and other for 
totally umbilic ones with a suitable restriction on their mean curvature are presented.
The description of maximal hypersurfaces in twisted product spacetimes of the 
form $I\,{ }_{f}\!\!\times F$ with a one-dimensional Lorentzian fiber is also included. Finally, the mean curvature equation for a spacelike graph on the fiber is computed and as an application, some Calabi-Bernstein-type results are proven. We also include in an Appendix some known conformal geometry results 
describing the transformation of relevant tensors and operators under the action of a conformal map in a pseudo-Riemannian background.

\end{abstract}

\section{Introduction}\label{sec1}

Twisted and warped products are particular cases of pseudo-Riemannian manifolds which have interesting geometric properties and often have an important role in General Relativity (GR).
Indeed, warped products constitute a significant group of the exact solutions of the Einstein field equations. Among these is the Schwarzschild solution, which describes the outer geometry of the spacetime around massive bodies such planets, stars, black holes, etc. and the Robertson-Walker solutions, determining the geometry of a simply-connected expanding or contracting universe satisfying the cosmological 
principle of homogeneity and isotropy. In the area of geometry, twisted products have played an important role in the study of complex space forms \cite{LohnherrReckziegel1999}, in Lagrangian submanifolds \cite{Chenetal1998} and in curvature netted hypersurfaces \cite{Koike1995} among others. 
Let us first give the definition of pseudo-Riemannian twisted product manifold:

\begin{Def}
	Let $(B,g_B)$ and $(F,g_F)$ be pseudo-Riemannian manifolds. Let $f:B\times F\rightarrow (0,+\infty)$ be a positive smooth function. Consider the canonical projections $\pi_B:B\times F\rightarrow B$ and $\pi_F:B\times F\rightarrow F$. 
	Then the (simple) twisted product $B\times_f F$ of $(B,g_B)$ and $(F,g_F)$ is the differentiable manifold $B\times F$ endowed with the pseudo-Riemannian metric $\gb$ defined as
	\begin{equation*}
		\gb(X,Y)=g_B(d\pi_B(X),d\pi_B(Y))+f^2 g_F(d\pi_F(X),d\pi_F(Y))
	\end{equation*}
	for all vector fields $X,Y\in\mathfrak{X}(B\times F)$. In this setting, $(B,g_B)$ is the base manifold and $(F,g_F)$ the fiber manifold. 
\end{Def}
Twisted products were first introduced in \cite{Bishop1972} by Bishop as $\textit{umbilic products}$,
being so named because the leaves of the 
canonical foliation made up with copies of the fiber 
are totally umbilical in the ambient space.
Later on, Chen \cite{Chen1979,Chen1981} would refer to them as \textit{twisted products}, and were
generalized to double twisted products of two pseudo-Riemannian manifolds
by Ponge and Reckziegel in \cite{PongeReckziegel1993}. 
In case that $f$ is exclusively defined 
on the base $(B,g_B)$, twisted products reduce to 
the so-called warped products. Warped spacetimes with a Lorentzian open interval 
$(I,-dt^2)$ as base manifold and with isotropic and homogeneous fiber 
are called \textit{Robertson-Walker}. A natural extension for such spacetimes
was proposed for the first time 
in \cite{AliasRomeroSanchez1995}, and constitutes the family of      
\textit{Generalized Robertson-Walker} (GRW) spacetimes, where the fiber 
is not required to have constant sectional curvature. 
For a survey on GRW spacetimes, 
we refer the reader to \cite{ManticaMolinari2017survey}.
In this work we focus our interest on twisted products spacetimes with $(I,-dt^2)$ as base manifold and a complete Riemannian fiber $(F,g_F)$ of arbitrary dimension. However, some results will be obtained in twisted product spacetimes where the Lorentzian interval plays the role of the fiber.

It is possible to find in the literature several characterizations of twisted product pseudo-Riemannian manifolds.
As Ponge and Reckziegel show in \cite{PongeReckziegel1993},   
a pseudo-Riemannian product manifold $(M_1\times M_2,g)$ where the leaves of the associated canonical foliations $L_1=\{M_1\times\{q\},\, q\in M_2\}$ and $L_2=\{ \{p\}\times M_2,\, p\in M_1\}$
intersect perpendiculary everywhere has the structure of
a twisted product of the form $M_1\times_f M_2$ if and only if
the leaves of $L_1$ are totally geodesic and the leaves of $L_2$ totally umbilic in $M_1\times_f M_2$.
A local characterization for Riemannian and Lorentzian manifolds to be twisted products of the form $I\times_{f}F$ was also put forward by Chen in 
\cite{Chen2017}, where he proved that the existence of a so-called torqued vector field was equivalent to such local structure. This result was later on exploited by Mantica and Molinari in \cite{ManticaMolinari2017}, where any twisted spacetime is proven to be purely electric according to the Bel-Debever criterion and a condition for it to be a GRW spacetime is put forward. It is possible to find in the literature some other characterizations for twisted product manifolds to be a warped, like the one shown in \cite{FLopezGRioKupeliUnal2001}, where the authors find a necessary and sufficient condition
on the ambient Ricci tensor to be warped, and which they refer to as \textit{mixed Ricci-flatness}.
In Proposition \ref{twistedtowarped} of Section \ref{spacelikesection} we also find  a condition that needs to hold on immersed spacelike surfaces in a twisted product spacetime of the form 
$I\times_f F$ to be a truly GRW spacetime.

Spacelike hypersurfaces constitute a remarkable class of manifolds in the area of Riemannian and Lo\-rentzian geometry.  
Since Lichnerowicz's work \cite{Lichnerowicz1944}  
the study of maximal and constant mean curvature (CMC) hypersurfaces
has been a relevant problem in the context of differential geometry, partial differential equations, and has also played a fundamental role with regard to the dynamic aspects of GR. In addition, 
when a spacetime admits a foliation by CMC hypersurfaces there is a preferred choice for the time coordinate (read\cite{Rendall1996} for more details). 
It is well known that 
the whole nature of a spacetime can be determined by the geometry of spacelike hypersurfaces, as Choquet-Bruhat and Geroch's result \cite{ChoquetGeroch1969} about the Cauchy problem for the Einstein equations reveals. 
In particular, CMC and maximal hypersurfaces have been of great utility  
as initial data in the problem since the associated PDE system 
is reduced to a linear first order differential one and a nonlinear second order elliptic differential equation \cite{Lichnerowicz1944}.
Globally hyperbolic spacetimes do not always admit CMC Cauchy surfaces and the problem 
of determining them in a generic sense is still open. Several existence results for compact Cauchy surfaces
can be found in the literature, like the fundamental one put forward by Bartnik \cite{Bartnik1988}, which in turn has motivated  
others in the same line, 
like the one put forward by Galloway and Ling in \cite{GallowayLing2018}, where the existence 
of the CMC Cauchy surface is consequence
of an ambient curvature hypothesis related to the strong energy condition.     
Naturally, the study of CMC and maximal hypersurfaces in twisted product ambients 
is also an issue of remarkable interest.
The validity of many results in twisted product ambients depends on the monotonic 
properties of the associated twisted function. In this work several non-existence results of CMC hypersurfaces are obtained in $I\times_f F$ when the appropriate expanding conditions take place and $(F,g_F)$ is assumed to be a closed (compact and without boundary) Riemannian manifold. We also present a  
characterization of maximal spacelike hypersurfaces in twisted products of the form $I\times_f F$ and in those of type $I\,{ }_{f}\!\!\times F$ (i.e. with Lorentzian interval as fiber), and another one for     
 totally umbilical hypersurfaces with a suitable restriction on their mean curvature in $I\times_f F$.

Maximal surfaces occupy a prominent place among CMC ones.
As Brill and Flaherty remark in \cite{BrillFlaherty1976}, maximal hypersurfaces  
can be used to describe the transition between expansive and contractive phases in some 
significant models of the universe. An important result involving maximal hypersurfaces is the so-called Calabi-Bernstein theorem in the Minkowski spacetime, which establishes that the only entire hypersurfaces satisfying the maximal hypersurface differential equation are spacelike affine hyperplanes (see \cite{Rubio2017} for more details). This result was first obtained by Calabi \cite{Calabi1970} in the $(n+1)$-dimensional Minkowski spacetime $\mathcal{M}^{1,n}$ for $n\leq 3$.
The proof was based on the original Bernstein result for minimal surfaces in the three-dimensional Euclidean space. In \cite{ChengYau1976} Cheng and Yau proved the theorem for arbitrary dimension, which shows a different nature for the problem with respect to the Euclidean context, where the analogous result just holds up to ambient dimension $8$ (see \cite{BombieriGiorgiGiusti1969}). 
Since then many other related Calabi-Bernstein type results 
have been obtained in different ambient spaces. We refer the reader to \cite{AkamineHondaUmeharaYamada2020,AlbujerAlias2009,AledoRubio2015,AledoRubioSalamanca2017,AledoRubioSalamanca2019,
	BartoloCaponioPomponio2021,CaballeroRomeroRubio2011,CamargoCaminhadeLima2010,PelegrinRomeroRubio2017,PelegrinRomeroRubio2019note,PelegrinRomeroRubio2019,RomeroRubio2016} for more information. In the last section  we obtain  
results in $I\times_f F$ in the same line 
than the Calabi-Bernstein main one, where $(F,g_F)$ is assumed to be closed.

The paper is organized as follows: in Section \ref{Preliminaries} we first introduce our terminology and present 
general properties of twisted product spacetimes of the form $I\times_f F$. 
Section \ref{spacelikesection}
begins with the description of the causal character of spacelike hypersurfaces in $I\times_f F$ in terms of the 
fiber geometry, the twisted function and 
the so-called time-height function $\tau$ (see for instance \cite{MarsSoria2014}),
which defines the time-distance at which each point on the surface is located. In addition, a relation that needs to hold on each spacelike hypersurface immersed in $I\times_f F$ for it to be a GRW spacetime is presented.
We also address some global hyperbolicity questions 
in $I\times_f F$. Specifically, we prove that $I\times_f F$ 
turns out to be globally hyperbolic
whenever it admits a complete spacelike hypersurface where the restriction of $f$ is bounded. 
Section \ref{main} is devoted to present the main results of the work. 
Several non-existence results of CMC spacelike hypersurfaces in expanding or contracting twisted product spacetimes are put forward. Exploiting well-known properties of conformal maps,  
totally umbilical hypersurfaces with a suitable restriction on their mean curvature are characterized in $I\times_f F$, as well as all compact maximal hypersurfaces. 
Likewise, the analogous result for maximal hypersurfaces in twisted products of the form $I\,{ }_{\alpha}\!\!\times M$ is also presented, where the Lorentzian interval $(I,-dt^2)$ plays the role of the fiber manifold. 
In addition to assuming the compactness of $(F,g_F)$, we will also require 
$I\times_f F$ to be either    
a expanding or contracting background, or to admit a so-called transition hypersurface which delimits two different regions, an expanding one to its past and a contracting one to its future. 
Finally, in Section \ref{Calabisection} the expression for the mean curvature of a spacelike graph over the fiber in $I\times _f F$ is derived, and  some existence and uniqueness results for the associated maximal Calabi-Bernstein PDE equation are presented. At the end of the work we add an Appendix with elementary conformal geometry results which illustrate how the Levi-Civita connection, the shape operator, the mean curvature and the Laplacian on any non-degenerate submanifold transform under the action of a global conformal map in a pseudo-Riemannian background.

\section{Preliminaries}
\label{Preliminaries}

Consider  $(F,g_{F})$ and $(B,g_{B})$ two pseudo-Riemannian manifolds of arbitrary dimension. Let $f:B\times F\rightarrow \mathbb{R}$ be a positive smooth function. The  manifold $B\times F$ endowed with the metric

\begin{equation*}
	\gb=\pi^{*}_{B}(\gbase)+f^2\pi^{*}_{F}(g_{F}),
\end{equation*}
where $\pi_{B}$ and $\pi_{F}$ are the canonical projections onto the \emph{base} $(B,g_B)$ and the \emph{fiber} $(F,g_F)$ respectively, is known as twisted product manifold and it is  denoted as $B\times_f F$ (see \cite{PongeReckziegel1993}, for instance). As it is usual, vector fields $X\in \mathfrak{X}(B)$ and $V\in \mathfrak{X}(F)$ can be \textit{lifted} to vector fields $\overline{X}, \overline{V}$ in
$\mathfrak{X}(B \times F)$. We will denote then as $L(B) $ and $L(F)$ the sets of lifted vector fields from $\mathfrak{X}(B)$ and $\mathfrak{X}(F)$ respectively. For simplicity, we will generally not distinguish between $f \circ \pi_B$ and $f$ or between $X$ and $\overline{X}$, as it will be clear from
context. There exist useful formulae for evaluating the Levi-Civita connection on these fields (see \cite{PongeReckziegel1993}). Indeed, given $X, Y \in L(B)$ and $V, W \in L(F)$
and denoting by $\conb$, $\nabla^B$ and $\nabla^F$ the Levi-Civita connections on $B \times_f F$, $(B,g_B)$, and $(F,g_F)$ respectively, it is not difficult to see that
\begin{eqnarray}
	\conb_X Y&=&\conbase_X Y  \label{covderone}\\
	\conb_X V&=&\conb_V X=X(\log f)V \label{covdertwo}\\
	\conb_V W&=&\confiber_V W+V(\log f)W+W(\log f)V-\conb(\log f) \, \gb(V,W). \label{covderthree}
\end{eqnarray}

In this work we will consider a twisted product Lorentzian $(n+1)$-manifold constructed with a $n$-Riemannian manifold $(F,g_{F})$ and  an interval $I\subset \mathbb{R}$ endowed with the negative metric, i.e., $(I, -dt^2)$, where $(F,g_F)$ and $(I,-dt^2)$ can be fiber or base alternatively depending on the case. In case that the fiber is $(F,g_F)$, the twisted product spacetime will be denoted by $I\times_f F$, and its metric will be of the form $\gb=-dt^2+f^2 \pi^*_F(g_F)$.
Likewise, when the fiber is $(I,-dt^2)$ we will refer to it as $I\,{ }_{f}\!\!\times F$ and the metric is of type
$\gb=-f^2 dt^2+\pi^*_F(g_F)$. For convenience and for the sake of clarity, we will also use the convention
$\overline{g}=-dt^2+f^2 g_{F}$ and $\overline{g}=-f^2 dt^2+g_{F}$.
Note that when function $f$ is just defined on $(I,-dt^2)$, we restrict in the first case to the so-called Generalized Robertson-Walker (GRW) spacetimes 
(see \cite{AliasRomeroSanchez1995} for more details) and when $f$ is defined on $(F,g_F)$ to the standard static spacetimes (see \cite{PelegrinRomeroRubio2019}).

The study of twisted products is relevant because they constitute a much wider family  of manifolds with respect to the warped product one. Even though twisted products satisfying suitable curvature conditions are actually warped products as mentioned before, there are many of them which do not meet this property. Given a pseudo-Riemannian manifold $(M,g)$, a vector field $X\in\mathfrak{X}(M)$ is said to be a conformal Killing if there exists a smooth function $\mu:M\rightarrow \mathbb{R}$ so that $\mathcal{L}_{X}g=\mu g$. The following result illustrates the existence of twisted products with one-dimensional base which are not isometric to GRW spacetimes. To show it, we will exploit the existence of conformal Killing vector fields in warped products of the form $I\times_f F$.

\begin{Prop}
	There exist twisted product spacetimes which are not isometric to warped product ones.
\end{Prop}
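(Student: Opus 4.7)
The plan is to exhibit an explicit twisted product spacetime and then use the distinguished conformal-symmetry structure of warped products to certify that it is not isometric to any of them. The starting point is the motivating observation that every warped product $I\times_f F$ with $\bar g=-dt^2+f(t)^2 g_F$ admits the timelike vector field $K=f(t)\partial_t$ as a conformal Killing field: a direct Lie-derivative calculation using the Koszul formulas~(\ref{covderone})--(\ref{covderthree}) gives
\[
\mathcal{L}_K\bar g=2\dot f(t)\,\bar g.
\]
Thus every warped product carries a specific type of timelike conformal Killing field, tangent to the base and with conformal factor depending only on the base coordinate.

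Next, I would construct the twisted product by perturbing a warped product conformally. Starting from $\bar g$, consider $\tilde g=\phi(t,p)^2\bar g$ for a positive function $\phi$ that cannot be written in separated form $\alpha(t)\beta(p)$, and then reparameterize the $t$-coordinate to restore the coefficient $-1$ in front of $dt^2$. After this adjustment, $\tilde g$ takes the canonical twisted form $-dt^2+\tilde f(t,p)^2 g_F$, with $\tilde f$ also non-separable by construction, yielding a concrete twisted product spacetime on $I\times F$.

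To conclude that $(I\times F,\tilde g)$ is not isometric to any warped product, I would argue by invariants. Any isometry between $(I\times F,\tilde g)$ and a warped product would transport the warped product's distinguished conformal Killing vector from the first step to a timelike conformal Killing field on $(I\times F,\tilde g)$ with the same rigid structure (tangent to a base and with conformal factor depending only on the base coordinate). Solving the conformal Killing equation $\mathcal{L}_V\tilde g=\mu\tilde g$ directly on the twisted metric and exploiting the non-separability of $\tilde f$, however, rules out any such vector field: the equation forces $\tilde f$ to split, contradicting the construction. As an alternative route, one may invoke the mixed Ricci-flatness criterion of \cite{FLopezGRioKupeliUnal2001} recalled in the introduction, which detects the non-warped character of $\tilde g$ via a computable local tensor.

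The main obstacle I expect is this last step: elevating the local or structural obstruction to a genuinely isometry-invariant statement. Ruling out every possible timelike conformal Killing field of warped type on $(I\times F,\tilde g)$ requires a careful analysis of the conformal Killing equation, which may depend sensitively on the specific choice of fiber $(F,g_F)$ and of the twisting $\tilde f$. Handling this rigorously, either by choosing $\tilde f$ so that the conformal Killing equation becomes tractable or by appealing to a rigidity theorem for warped-product decompositions, is where the real technical work lies.
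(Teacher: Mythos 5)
Your opening move is the same as the paper's: every warped product $I\times_f F$ carries the timelike conformal Killing field $K=f(t)\partial_t$ with $\mathcal{L}_K\bar g=2\dot f\,\bar g$, and the existence of a timelike conformal Killing field is an isometry invariant, so it suffices to produce a twisted product admitting no such field. The problem is that you stop exactly at the decisive step. You propose to ``solve the conformal Killing equation $\mathcal{L}_V\tilde g=\mu\tilde g$ directly'' and show that non-separability of $\tilde f$ forces a contradiction, but you do not carry this out, and you yourself flag it as ``where the real technical work lies.'' As it stands there is no proof: ruling out \emph{every} timelike conformal Killing field on a noncompact twisted metric by a direct PDE analysis is genuinely delicate, and nothing in your sketch controls it. Your fallback — the mixed Ricci-flatness criterion of \cite{FLopezGRioKupeliUnal2001} — does not close the gap either: that criterion decides whether the \emph{given} twisted decomposition can be improved to a warped one, not whether the manifold is abstractly isometric to some warped product with a possibly unrelated base/fiber splitting, so it is not by itself an isometry-invariant obstruction. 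There is also a secondary flaw in your construction: after rescaling $\bar g$ by $\phi(t,p)^2$ with $\phi$ non-separable, you cannot ``reparameterize the $t$-coordinate'' to restore the coefficient $-1$ in front of $dt^2$, since any new time function absorbing $\phi$ would have to depend on $p$ and would introduce cross terms. (This is easily repaired — just write down $-dt^2+\tilde f(t,p)^2 g_F$ directly — but it should be fixed.)

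The paper closes the gap you left open by a global, soft argument instead of a PDE one: it makes the example \emph{compact}. Take $f$ non-constant, positive and periodic, form the Lorentzian torus $(\mathbb{T}^2,\tilde g_f=-dt^2+f(t+x)^2dx^2)$, and observe that it is geodesically incomplete (via the structure results for Lorentzian tori in \cite{Sanchez1997}). By the theorem of Romero and S\'anchez \cite{RomeroSanchez1995}, every compact Lorentzian manifold admitting a timelike conformal Killing vector field is geodesically complete; hence $(\mathbb{T}^2,\tilde g_f)$ admits none, and in particular cannot be isometric to any warped product. Higher-dimensional examples follow by warping a compact fiber over this torus, with incompleteness inherited because the torus sits totally geodesically inside. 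If you want to complete your argument along your own lines, the cleanest route is to adopt this compactness-plus-incompleteness mechanism; otherwise you must actually solve the conformal Killing system for a specific $\tilde f$, which is a substantially harder task than the rest of your sketch suggests.
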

\begin{proof}
	Let us consider $f:\mathbb{R}\rightarrow\mathbb{R}$ a non-constant periodic function of period $T$, differentiable and strictly positive. If $(t,x)$ stand for the usual coordinates in $\mathbb{R}^2$, $(\mathbb{R}^2,g_f)$ with    
	$g_f=-dt^2+f^2(t+x)dx^2$ is a twisted product Lorentzian manifold. 
	If we consider $\mathbb{S}^1=\mathbb{R}/T\mathbb{Z}$ where $T$ is the period of $f$, then
	$\mathbb{T}^2=\mathbb{S}^1\times\mathbb{S}^1$ is naturally endowed with the Lorentzian twisted metric
	$\tilde{g}_f=-dt^2+\tilde{f}dx^2$, where $\tilde{f}([t],[x]):=f(t+x)$.
	We know that for $f=1$,  $g_f$ is incomplete \cite{Sanchez1997} and hence 
	$(\mathbb{T}^2,\tilde{g}_f)$ is incomplete too. 
	In this case, there is no timelike conformal vector field on $(\mathbb{T}^2,\tilde{g}_f)$. In fact, if such vector field did exist,  $(\mathbb{T}^2,\tilde{g}_f)$ would be geodesically complete
	since every compact Lorentzian manifold aditting a conformal Killing vector is geodesically complete \cite{RomeroSanchez1995}. In particular, $(\mathbb{T}^2,\tilde{g}_f)$ can not be isometric to a warped product.
	The torus $(\mathbb{T}^2,\tilde{g}_f)$ can also be used to construct new examples of twisted products which do not admit 
	a warped product structure. Consider a compact Riemannian manifold $(F,g_F)$ and let us construct the warped product $\mathbb{T}^2\times_{\tilde{f}}F$. Since $(\mathbb{T}^2,\tilde{g}_f)$ is the base space of this warped product, it 
	is totally geodesic in $\mathbb{T}^2\times_{\tilde{f}}F$ when it is seen as $\mathbb{T}^2\times\{p_0\}$ with $p_0\in F$. Therefore, the fact that  $(\mathbb{T}^2,\tilde{g}_f)$ is incomplete tells us that $\mathbb{T}^2\times_{\tilde{f}}F$ is incomplete too.
	Note that $\mathbb{T}^2\times_{\tilde{f}}F$ is identical to the twisted product $\mathbb{S}^1\times_{\tilde{f}}(\mathbb{S}^1\times F)$ 
	whose metric is of the type $-dt^2+f^2(t+x)(dx^2+g_{\hat{F}})$ 
	on $\mathbb{R}\times(\mathbb{R}\times\hat{F})$, with $\hat{F}$ the 
	universal cover of $F$.
	Using again the main result of \cite{RomeroSanchez1995}, there is no timelike conformal vector field on $\mathbb{S}^1\times_{\tilde{f}}(\mathbb{S}^1\times F)$; in particular $\mathbb{S}^1\times_{\tilde{f}}(\mathbb{S}^1\times F)$ can not be isometric to a GRW spacetime. 	
\end{proof}

As we have just seen, the analysis of spacelike hypersurfaces in twisted product spacetimes can not be reduced to the existing one in GRW.
Let us first show general properties of interest of twisted product spacetimes of the form $I\times_f F$. The vector field  $\partial_t:=\frac{\partial}{\partial t}$ is defined globally on $I\times_f F$. Function $t$ is a smooth global time function and as consequence $I\times_f F$  is stably causal (see \cite{BeemEhrlichEasley}, pg. 64)
Furthermore, it is globally hyperbolic if
and only if its fiber is complete (Thm. 3.66 in \cite{BeemEhrlichEasley}  also applies to twisted product spacetimes). 
Besides, it is clearly time-orientable and consequently it constitutes a relativistic spacetime. In \cite{Chen2014} Chen characterizes 
GRW spacetimes by the presence of the so-called \textit{concircular vectors}, which were intruduced by Fialkow for the first time in \cite{Fialkow1939}. Specifically, a Lorentzian manifold $(M,g)$ of dimension greater or equal than three is a GRW spacetime if and only if there exists a timelike \textit{concircular vector} $K$, i.e. a timelike vector satisfying
\begin{equation*}
	\conb_X K=\mu X
\end{equation*}  
for all $X\in \mathfrak{X}(M)$, and for some smooth function $\mu: M\rightarrow \mathbb{R}$.
If we denote by $\overline{\nabla}$ the Levi-Civita connection of $I\times_f F$ and make use of the  
covariant derivative formulas (\ref{covderone}), (\ref{covdertwo}) and (\ref{covderthree}), we can see that for any vector field $X\in\mathfrak{X}(I\times_f F)$ and for $K:=f\partial _t$
we have
\begin{equation}
	\label{dervconf}
	\conb_X \vconf=df(X^F)\partial_t+(\der f) X=(\der f) X+\bm{\omega}(X) \vconf, 
\end{equation}  
where $X^F$ is the fiber-parallel component of $X$, and 
$\bm{\1form}$ is the one-form defined as 
\begin{equation}
	\label{oneform}
	\bm{\1form}(X)=\left( d(\log f)\circ \pi_F \right)(X),
\end{equation}
where $\pi_{_F}:I\times F\rightarrow F$ is the canonical projection onto $(F,g_F)$. In addition 
$\bm{\1form} (K)=0$, so $K$ is a \textit{torqued vector field} in the sense of \cite{Chen2017}, defined globally on $I\times_f F$,  and $\pmb{\omega}$
is defined to be the \textit{torqued one-form}. From ($\ref{dervconf}$) it is easy to prove
\
\begin{equation*}
	\mathcal{L}_{K}\gb=2(\der f)\gb+\bm{\1form}\otimes \bm{K}+\bm{K}\otimes\bm{\1form},
\end{equation*} 
where $\bm{K}$ is the $1$-form metrically equivalent to $K$.
In particular, when $U,V\in\mathfrak{X}(F)$, 
\begin{equation}\label{Lie}
	(\mathcal{L}_{K}\gb)(U,V)=f(\mathcal{L}_{\partial_t}\gb)(U,V)=2(\der f)\gb(U,V).
\end{equation}
Thus, $\partial_t$ is a spatially conformally stationary reference frame on $I\times_f F$ in the sense of \cite{RomeroSanchez1998}. The case where function $\partial_t f$ is constant signed deserves special interest, since the volume of any immersed spacelike submanifold which is Lie-dragged along the vector $\partial_t$ is monotonically altered.   
Indeed, the observers in $\partial_t$ detect expansion if $\der f\geq 0$ or compression when $\der f\leq 0$ along their proper times.

\section{Spacelike hypersurfaces in $\bm{I\times_f F}$}
\label{spacelikesection}

Given an $n$-dimensional connected manifold $\Sigma$, an immersion $\psi:\Sigma\longrightarrow I\times_f F$ is said to be spacelike if
the ambient Lorentzian metric $\gb$ induces, via $\psi$, a Riemannian metric $g_{_\Hyp}$ on $\Sigma$. In such case $(\Sigma,g_\Sigma)$
is called a spacelike hypersurface. In case that $(\Sigma,g_{_\Hyp})$ is orientable, there is a unique choice of a unit timelike vector field $N$ normal to $\Hyp$ with the same time-orientation than the vector
field $\partial_t$ (note that in this case $\gb(\partial_t, N)< 0$).  
The second fundamental form vector of $\Hyp$ is defined as $\vec{\sff}(X,Y)=(\conb_{X}Y)^{\perp}$, where $X, Y\in\mathfrak{X}(\Hyp)$ and  $Z^{\top}$ and $Z^{\perp}$ denote the tangent and orthogonal component of any vector field $Z$ along $\Hyp$ respectively. If we consider the Weingarten operator $A(X)=-\conb_X N$, we can express the second fundamental form vector as
\begin{equation*}
	\vec{\sff}(X,Y)=-\gb(A(X),Y)N, 
\end{equation*}
and consequently its covariant form reads $\sff(X,Y)=\gb(\vec{\sff}(X,Y),N)=\gb(A(X),Y)$. The mean curvature vector is defined by $\vec{H}=\frac{1}{n}\trs(\vec{\sff})$ and the mean curvature function $H=-\frac{1}{n}\trs(A)$. 
The Gauss equation relates
the ambient connection $\conb$ with the induced connection $\cons$ on $\Hyp$ and in this setting becomes
\begin{equation*}
	\conb_X Y=\cons_X Y-\gb(A(X),Y)N.
\end{equation*}
On the other hand, we  define on $\Hyp$ the hyperbolic angle $\theta:\Sigma\rightarrow\mathbb{R}$ as the function given by
\begin{equation}
	\label{hyperangle}	
	\cosh\theta=-\gb\left(N,\partial_{t}\right).
\end{equation}
The so-called ``time-height'' function $\tau=\pi_{I} \circ \psi:\Sigma\rightarrow\mathbb{R}$ (see for instance \cite{MarsSoria2014}) measures the time-distance at which each point of $\Sigma$ lies, where $\pi_I: I\times_f F\rightarrow I$ is the canonical projection onto the base $(I,-dt^2)$. Function $\tau$ is the restriction of $t$ to the hypersurface $\Sigma$, so the notation $\tau=t\vert_{\Sigma}$ will be sometimes considered. 
The expression for the induced metric $g_\Sigma$ on $\Sigma$ is given by
\begin{equation}
	\label{inducedmetricexpr}
	g_{\Sigma}=-d\tau\otimes d\tau+f^2 \pi_F^* (g_F).
\end{equation}  
With the following result we describe the causal character of spacelike hypersurfaces in 
twisted product spacetimes of the form $I\times_f F$ in terms $\tau$, $f$ and 
the fiber geometry. 
\begin{Prop}
	\label{spacelikecharac}
	Let $\psi:\Sigma\rightarrow I\times_{f} F$ be an oriented spacelike hypersurface and let
	$\tau=t\vert_{\Sigma}$ its associated time-height function. Then the following relations on the hypersurface $\Sigma$ are equivalent: 
	\begin{itemize}
		\item[(i)] $g_\Sigma$ is a Riemannian metric,
		\item[(ii)] $\vert d\tau(U) \vert \leq f\vert U^F\vert_{g_F}$ for all $U\in\mathfrak{X}(\Sigma)$,
		\item[(iii)] $\vert\nabla\tau\vert^2 \leq  f\vert\nabla\tau^F\vert_{g_F}$,
		\item[(iv)] $\gb(\partial_t,N)<0$,
	\end{itemize}
	where $\vert\,.\,\vert_{g_F}$ and $\vert\,.\,\vert$ are the norms associated to $g_F$ and $g_\Sigma$ respectively, and $N$ is the future unit normal to $\Sigma$.	
\end{Prop}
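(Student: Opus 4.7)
The plan is to extract all four equivalences from the explicit formula (\ref{inducedmetricexpr}) for the induced metric. Using $\tau=\pi_I\circ\psi$ and writing $d\psi(U)=d\tau(U)\,\der+U^F$ with $U^F:=d(\pi_F\circ\psi)(U)$, the ambient identity $\gb=-dt^2+f^2 g_F$ produces the pointwise quadratic form
\begin{equation*}
g_\Sigma(U,U)=-d\tau(U)^2+f^2\,|U^F|_{g_F}^2,\qquad U\in T_p\Sigma,
\end{equation*}
on which essentially all the bookkeeping of the proposition takes place.

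From this identity the equivalence (i)$\Leftrightarrow$(ii) is immediate: non-negativity of $g_\Sigma(U,U)$ for every $U$ rephrases as $|d\tau(U)|\le f|U^F|_{g_F}$, and the equality case for a nonzero $U_0$ would create a nonzero null vector $d\psi(U_0)$ at $p$; by the immersion hypothesis such a $U_0$ can only make $g_\Sigma$ degenerate rather than Lorentzian, and excluding it gives the Riemannian character in (i). To include (iii), I would close the cycle (i)$\Rightarrow$(ii)$\Rightarrow$(iii)$\Rightarrow$(i). The step (ii)$\Rightarrow$(iii) is a direct specialisation to $U=\nabla\tau$, together with the defining identity $d\tau(\nabla\tau)=|\nabla\tau|^2$. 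For (iii)$\Rightarrow$(i), contraposition is the natural route: if $g_\Sigma$ failed to be positive definite at some $p$, then choosing a $g_\Sigma$-extremal direction at $p$ one produces a violation of (iii) at that point.

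Finally, (i)$\Leftrightarrow$(iv) is a standard Lorentzian splitting. Assuming (i), the future unit normal $N$ exists, and the decomposition $\der=-\gb(\der,N)N+\der^\top$, together with $\gb(N,N)=-1$, gives
\begin{equation*}
\gb(\der,N)^2=1+|\der^\top|_{g_\Sigma}^2\ge 1,
\end{equation*}
so the time orientation pins the sign to $\gb(\der,N)\le -1<0$. Conversely, the same identity shows that any normal $N$ with $\gb(\der,N)<0$ must be timelike, and hence $T_p\Sigma=N^{\perp}$ spacelike. The main obstacle I foresee is the converse (iii)$\Rightarrow$(i): unlike (ii), condition (iii) is pointwise information only along the distinguished direction $\nabla\tau$, so one must argue carefully that this single direction is already extremal enough to detect any loss of positivity of $g_\Sigma$; all the other implications reduce to algebraic manipulations on the fundamental quadratic form above.
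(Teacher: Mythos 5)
Your proposal follows essentially the same route as the paper -- everything is read off the pointwise quadratic form $g_\Sigma(U,U)=-d\tau(U)^2+f^2\vert U^F\vert_{g_F}^2$, and $(i)\Leftrightarrow(iv)$ is the usual tangent/normal duality -- but the step you yourself flag as the main obstacle, $(iii)\Rightarrow(i)$, is left genuinely open. Saying ``choose a $g_\Sigma$-extremal direction'' does not produce an argument: condition $(iii)$ constrains only the single vector $\nabla\tau$, and an arbitrary direction in which positivity fails has no a priori relation to $\nabla\tau$, so no violation of $(iii)$ is obtained from it.

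The paper closes this step with a short orthogonality observation that your writeup is missing. If $U$ is $g_\Sigma$-orthogonal to $\nabla\tau$, then $d\tau(U)=g_\Sigma(\nabla\tau,U)=0$, i.e.\ $U=U^F$ has no $\partial_t$-component, and the quadratic form gives $g_\Sigma(U,U)=f^2 g_F(U^F,U^F)\geq 0$ unconditionally, because $(F,g_F)$ is Riemannian by hypothesis. Hence the only direction in which positivity could fail is $\nabla\tau$ itself, and that direction is exactly what $(iii)$ controls, since $(iii)$ is $(ii)$ evaluated at $U=\nabla\tau$ via $d\tau(\nabla\tau)=\vert\nabla\tau\vert^2$. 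This is the precise sense in which $\nabla\tau$ is ``extremal enough'', and it is the one idea needed to complete your proof. A smaller point: in $(iv)\Rightarrow(i)$ you assert that the identity $\gb(\partial_t,N)^2=1+\vert\partial_t^{\top}\vert^2$ shows that a normal with $\gb(\partial_t,N)<0$ must be timelike, but that identity was derived assuming $\gb(N,N)=-1$, so the reasoning is circular; the sign of $\gb(\partial_t,N)$ alone does not force timelikeness (spacelike vectors can also pair negatively with $\partial_t$). As in the paper, $(iv)$ should be read as saying that the normal line $(T_p\Sigma)^{\perp}$ is timelike with $N$ future-pointing, which is equivalent to $T_p\Sigma$ being spacelike.
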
 
\begin{proof}
	Let us first prove that $(i)$ and $(ii)$ are equivalent. Metric $g_\Sigma$ is Riemannian if and only if  
    $g_\Sigma(U,U)=-d\tau(U)^2+f^2 g_F(U^F,U^F)\geq 0$ for every $U\in\mathfrak{X}(\Sigma)$, which is equivalent 
	to $(ii)$. To prove that $(ii)$ implies $(iii)$, it suffices to consider $U=\nabla\tau$. For the equivalence of 
	$(i)$ and $(iii)$, if $g_\Sigma$ is Riemannian, then $g_\Sigma(\nabla\tau,\nabla\tau)\geq 0$, which implies $(iii)$. Reciprocally, given any $U\in \mathfrak{X}(\Sigma)$ orthogonal to $\nabla\tau$, $g_\Sigma(U,U)=f^2g_F(U^F,U^F)\geq 0$
	since $d\tau(U)=0$,
	so $g_F$ is a non-degenerate positive definite metric, i.e. Riemannian. Finally we prove the equivalence of $(i)$ and $(iv)$. The metric $g_\Sigma$ is Riemannian when each tangent plane $T_p \Sigma$ is endowed with a positive definite metric for all $p\in\Sigma$. This happens if and only if  
	the normal subspace $(T_p\Sigma)^{\bot}=\mathrm{span}\{N\}$ is timelike, i.e. $\gb(\partial_t,N)<0$ for both vectors 
	lie in the same future timelike cone.
\end{proof}

Any vector field $Z$ along a spacelike hypersurface $\psi: \Sigma\rightarrow I\times_f F$ decomposes into its tangent and normal components to $\Sigma$ as $Z=Z^{\top}+Z^{\bot}=Z^{\top}-\gb(Z,N)N$. On the other hand, its decomposition into its horizontal and vertical components reads
$Z=Z^0\partial_t+Z^F$,
where $Z^0=-\gb(\partial_t,Z)$. In the particular case that $Z\in\mathfrak{X}(\Sigma)$, $Z^0=d\tau(Z)$. One can easily check that $\cons \tau=-(\partial_t)^{\top}$. Squaring the decompositions $\partial_t=-\cons \tau-\gb(\partial_t,N)N$ and  
$N=-\gb( N, \partial_t)\partial_t+N^F$  we get 
\begin{equation}
	\label{gradrel}
	\vert\cons \tau\vert^2=\vert N^F \vert^2=-1+\gb(\partial_t,N)^2=\sinh^2\theta, 
\end{equation} 
where relation (\ref{hyperangle}) has been considered.
When $\psi: \Sigma\rightarrow I\times_f F$ is spacelike, (\ref{gradrel}) implies that $\gb(N,\partial_t)\leq -1$ 
along $\Sigma$. In the twisted product $I\times_f F$ the level hypersurfaces of the function $\pi_{I}$ constitute a distinguished family of spacelike hypersurfaces: the so-called spacelike slices. This family provides a foliation of the spacetime, whose leaves are the restspaces of the reference frame given by the vector field $\partial_t$, i.e. $\partial_t$ is the future unit normal vector field to each slice.
Every spacelike slice is totally umbilical $\cite{PongeReckziegel1993}$. In fact, the slice 
$\tau=t_0$ has $A=-\der (\log f)(t_0,p)\textrm{Id}$ as shape operator, where we denote by ``$\textrm{Id}$'' the identity endomorphism. As a consequence its  mean curvature function is 
$H=\der (\log f)(t_0,p)$. In contrast to the warped product case, the mean curvature of the slices is not necessarily constant along them.  
In case that $I\times_f F$ is globally hyperbolic, 
every spacelike slice is a Cauchy hypersurface.

As already mentioned, formula (\ref{Lie}) shows that when $\partial_tf\geq 0$ (resp. $\partial_t f\leq 0$) the observers in $\partial_t$ ({\it{comoving observers}}) measure a universe in expansion (resp. in contraction). It may also happen that $I\times_f F$ admits a maximal slice $S_{t_0}\equiv\{t= t_0\}$ separating an expanding 
phase to its past from a contracting one to its future (read \cite{BrillFlaherty1976} for more details).  
All these cases can be put together in the following definition: 
\begin{Def}
	Let $I\times_f F$ be twisted product spacetime. Then
	\begin{itemize}
		\item[(i)] if $\partial_t f>0$, we say that it is an \textit{expanding} twisted product spacetime.
		\item[(ii)] if $\partial_t f<0$, we say that it is a \textit{contracting} twisted product spacetime.
		\item[(iii)] if the spacetime is expanding for $t\leq t_0$ and it is contracting for $t\geq t_0$, where $t_0\in\mathbb{R}$ is constant, then the maximal hypersurface $S_{t_0}\equiv\{t= t_0\}$ is said be a \textit{transition spacelike slice}. 
	\end{itemize}
\end{Def}

Twisted product manifolds of the form $I\times_f F$ generalize GRW spacetimes. As Chen explains in \cite{Chen}, the existence of a torqued vector field on a pseudo-Riemannian manifold implies a local twisted product structure. Relation (\ref{dervconf}) shows that a torqued vector $K$ is concircular when the associated torqued one-form $\bm{\omega}=d(\log f)\circ\pi_F$ vanishes identically. By formula (\ref{oneform}), this happens if and only if $X^F(f)=0$ for any $X^F\in\mathfrak{X}(F)$, or equivalently
\begin{equation}
	\label{twistedtobewarped}
	\nabla^F f=0. 
\end{equation}
When this occurs, $f$ becomes constant
on each slice and the ambient spacetime is a GRW spacetime. 
In the following result we obtain a condition that is fulfilled on every spacelike hypersurface when $I\times_f F$ is actually a warped product.
\begin{Prop}
	\label{twistedtowarped}
	Let $\psi:\Sigma\rightarrow I\times_f F$ be an immersed spacelike hypersurface in the twisted product spacetime $I\times_f F$. In case that $I\times_f F$ is isometric to a warped product, the vector field  
	\begin{equation}
		\label{warpedhypcondition}
		\vecs=-(\der f)\cons\tau+\cons f
	\end{equation} 
	vanishes on $\Sigma$, where $\tau$ is the associated time-height function of $\Sigma$, and $\nabla$ its induced connection.
\end{Prop}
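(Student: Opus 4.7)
The plan is to exploit the characterization (\ref{twistedtobewarped}) appearing just before the statement: the twisted product $I\times_f F$ is (isometric to) a warped product if and only if the fiber-gradient of $f$ vanishes identically, equivalently if and only if $f$ depends only on the base coordinate $t$. Once this is available, the identity $\vecs \equiv 0$ on any spacelike hypersurface $\Sigma$ reduces to a chain-rule calculation.

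First I would reduce to the case $f=f(t)$. Since we assume $I\times_f F$ is isometric to a warped product, by (\ref{twistedtobewarped}) we may write $f$ as a smooth positive function of the single variable $t\in I$, so that $\partial_t f=f'(t)$ and $X^F(f)=0$ for every $X^F\in L(F)$. In particular, the restriction of $f$ to the spacelike hypersurface $\Sigma$ equals $f\circ\tau$, where $\tau=t|_\Sigma$ is the time-height function.

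Next I would compute the two terms appearing in (\ref{warpedhypcondition}) separately. For any $X\in\mathfrak{X}(\Sigma)$, the chain rule gives
\begin{equation*}
g_\Sigma(\cons f,X)=X(f\circ\tau)=f'(\tau)\,X(\tau)=f'(\tau)\,g_\Sigma(\cons\tau,X),
\end{equation*}
so $\cons f=f'(\tau)\,\cons\tau$ on $\Sigma$. On the other hand, evaluating $\partial_tf$ at points of $\Sigma$ yields simply $(\partial_tf)|_\Sigma=f'(\tau)$. Substituting these two expressions into the definition of $\vecs$ produces
\begin{equation*}
\vecs=-f'(\tau)\,\cons\tau+f'(\tau)\,\cons\tau=0,
\end{equation*}
which is the claimed vanishing.

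There is essentially no hard step here; the only subtlety worth flagging is the notational identification between the ambient function $f$ and its restriction to $\Sigma$ and, correspondingly, between $\partial_tf$ viewed ambiently and $f'(\tau)$ viewed as a function on $\Sigma$. Provided (\ref{twistedtobewarped}) is invoked to move from the twisted to the warped setting, the proof is just chain rule together with the fact that the induced gradient of a pulled-back function of $t$ is a scalar multiple of $\cons\tau$.
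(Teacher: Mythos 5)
Your argument is correct, and it does prove the statement, but it follows a genuinely different (and less general) route than the paper. The paper's proof does not pass to the special case $f=f(t)$ at all: it computes, for an arbitrary twisted function $f$ and arbitrary $U\in\mathfrak{X}(\Sigma)$, the restriction of the torqued one-form to the hypersurface via the splitting $U=d\tau(U)\partial_t+U^F$, obtaining the identity
\begin{equation*}
\bm{\omega}(U)=\frac{df}{f}\left(U^F\right)=\frac{1}{f}\,\gb\bigl(-(\der f)\cons\tau+\cons f,\;U\bigr)=\frac{1}{f}\,\gb(\vecs,U),
\end{equation*}
so that $\vecs$ is exhibited as the $g_\Sigma$-dual of $f\,\bm{\omega}\vert_{T\Sigma}$; the conclusion is then immediate from the vanishing of $\bm{\omega}$ in the warped case. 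This identity is reused later in the paper (it enters the computation of $\triangle\tau$ in Proposition \ref{Proplaptau}), which is what the paper's route buys. Your route instead invokes the equivalence ``warped $\Leftrightarrow\nabla^F f=0$'' to reduce to $f=f(t)$ and then applies the chain rule on $\Sigma$; this is more elementary and perfectly valid, though it yields only the implication needed here and not the general identity. One cosmetic point, which applies equally to the paper's own proof: both arguments implicitly read ``isometric to a warped product'' as ``the given twisted decomposition is warped,'' i.e.\ that the hypothesis forces $\nabla^F f=0$ for the given $f$; since you cite exactly the criterion the paper itself uses, this is not a gap relative to the paper.
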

\begin{proof}
	Let $\psi:\Sigma\rightarrow I\times_f F$ be a spacelike hypersurface and $U\in\vfield(\Sigma)$. 
	The expression for the restriction to $\Sigma$ of the 
	torqued one-form $\bm{\1form}=d(\log f)\circ \pi_F$ associated to the vector $K=f\partial_t$ 
	reads
	\begin{equation}
		\label{1formsurface}
		\bm{\1form}(U)=\frac{df}{f}\left(U^F\right)=\frac{df}{f}\left(-d\tau(U)\partial_t+ U\right)=
		\frac{1}{f}\gb( -(\der f)\cons\tau+\cons f, U),
	\end{equation} 
	where the decomposition  
	$U=d\tau(U)\partial_t+U^F$ into its horizontal and vertical components
	has been considered. In case that
	$I\times_f F$ is a warped product the torqued one-form $\bm{\omega}$ is zero, which 
	occurs if and only if $\vecs=-(\der f)\cons\tau+\cons f=0$ along $\Sigma$.  
\end{proof}

\begin{remark}
	The vanishing of vector (\ref{warpedhypcondition}) on $\Sigma$ reduces to $\nabla f=0$ if $\Sigma$ is a slice, 
	so relation (\ref{twistedtobewarped}) is recovered since  
	$\nabla f=f^{-2}\nabla^F f$ in this case. 
\end{remark}

In general, 
the completeness of a twisted product of the form $B\times_f F$ can not be ensured 
even in case that the base manifold $(B,g_B)$ and the fiber $(F,g_F)$ are complete pseudo-Riemannian manifolds.
In this work we are interested in the study of twisted products $I\times_f F$ with complete Riemannian fiber $(F,g_F)$. As it happens in the warped case,  
$I\times_f F$ is globally hyperbolic under this hypothesis \cite{BeemEhrlichEasley}. We next prove that 
global hyperbolicity is also derived from the fact that $I\times_f F$ admits a complete spacelike hypersurface 
where the restriction of the twisted function $f$ is bounded. 
Some of the forthcoming results are an adaptation from \cite{AliasRomeroSanchez1995} to the twisted product spacetime context. To prove them we first need 
the following lemma, which establishes sufficient conditions for a map between Riemannian manifolds 
to be a covering one, and generalizes Lemma 3.3, Chapter 7 in \cite{DoCarmo}.
\begin{Lem}
	\label{DoCarmoextendedlemma}
	Let $(M,g_M)$ be a complete Riemannian manifold and let $h:(M,g_M)\rightarrow (N,g_N)$ be a local diffeomorphism 
	onto a Riemannian manifold $(N,g_N)$ where for all $p\in M$ and for all $U\in T_p M$, 
	\begin{equation}
		\label{covercondition}
		\vert U\vert_{g_M}\leq f\vert dh_p(U)\vert_{g_N}
	\end{equation}
	for some positive bounded function $f:M\rightarrow \mathbb{R}$, where $\vert\,.\,\vert_{g_M}$ and $\vert \,.\ \vert_{g_N}$ are the norms with respect to the metrics $g_M$ and $g_N$ respectively. Then $h$ is a covering map.
\end{Lem}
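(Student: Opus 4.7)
The plan is to adapt the standard argument from Do Carmo (Lemma 3.3, Chapter 7), whose original hypothesis is $|U|_{g_M}\le |dh_p(U)|_{g_N}$ pointwise. Here that pointwise bound is replaced by the assumption that there exists a constant $C>0$ with $f\le C$, which immediately yields the uniform inequality
\begin{equation*}
|U|_{g_M}\le C\,|dh_p(U)|_{g_N}\qquad\text{for all }p\in M,\ U\in T_pM.
\end{equation*}
This is the only feature of $f$ that the argument will use, so the weakening from a pointwise bound to a bounded factor is essentially cosmetic once the uniform constant $C$ has been extracted.

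First I would establish the path--lifting property for piecewise $C^1$ curves: given $p_0\in M$ and a curve $\gamma:[0,a]\to N$ with $\gamma(0)=h(p_0)$, there exists a unique lift $\widetilde\gamma:[0,a]\to M$ starting at $p_0$ with $h\circ\widetilde\gamma=\gamma$. Since $h$ is a local diffeomorphism, a unique lift exists on a maximal interval $[0,b)$ with $b\le a$. Differentiating $h\circ\widetilde\gamma=\gamma$ gives $dh_{\widetilde\gamma(t)}(\widetilde\gamma'(t))=\gamma'(t)$, and applying the uniform bound yields $|\widetilde\gamma'(t)|_{g_M}\le C\,|\gamma'(t)|_{g_N}$. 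Integrating this inequality shows that $\widetilde\gamma|_{[0,b)}$ has finite length, hence is contained in a closed metric ball around $p_0$ of radius $C\cdot L(\gamma|_{[0,b)})$.

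Assuming $b<a$, the main step is to show that $\widetilde\gamma(t)$ has a limit as $t\to b^-$: any sequence $t_n\nearrow b$ gives a Cauchy sequence $\widetilde\gamma(t_n)$ in the Riemannian distance of $M$, so by the Hopf--Rinow theorem applied to the complete manifold $(M,g_M)$, it converges to some point $q\in M$. Continuity of $h$ forces $h(q)=\gamma(b)$, and the local-diffeomorphism property of $h$ at $q$ lets us extend $\widetilde\gamma$ beyond $b$, contradicting maximality; therefore $b=a$ and the lift exists on all of $[0,a]$. This completeness--plus--length--bound step is the heart of the proof, and is precisely where the hypotheses on $M$ and on $f$ intervene.

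Once the lifting property is secured, the conclusion that $h$ is a covering map follows by the standard topological argument: given $y\in N$, choose a normal ball $B\subset N$ centred at $y$ in which radial geodesics from $y$ are defined, lift each such radial geodesic starting from each preimage $p\in h^{-1}(y)$ to obtain a homeomorphism from $B$ onto a neighbourhood $V_p$ of $p$, and verify that the collection $\{V_p\}_{p\in h^{-1}(y)}$ is pairwise disjoint and maps diffeomorphically onto $B$ via $h$. Uniqueness of lifts ensures the disjointness, and surjectivity of each $V_p\to B$ follows from the path--lifting applied to radial curves, exhibiting $B$ as an evenly covered neighbourhood of $y$.
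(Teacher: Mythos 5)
Your proposal is correct and follows essentially the same route as the paper: both reduce the problem to the path--lifting property, bound the length of the partial lift on the maximal interval using the uniform constant $C$ extracted from the boundedness of $f$, invoke completeness of $(M,g_M)$ to obtain a limit point of the lift at the endpoint, and then use the local-diffeomorphism property there to extend the lift and contradict maximality. The only cosmetic difference is that you argue via Cauchy sequences and Hopf--Rinow while the paper extracts an accumulation point from a compact set, and you sketch the final ``path lifting implies covering'' step that the paper simply cites from Do Carmo.
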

\begin{proof}
	As it is shown in \cite{DoCarmo1976} (pp. 383), this result would follow from the fact that 
	$h$ has the path lifting property for curves in $(N,g_N)$, i.e. given a smooth curve $c:[0,1]\rightarrow (N,g_N)$ and a point $q\in M$ with $h(q)=c(0)$, there is another curve $\overline{c}:[0,1]\rightarrow (M,g_M)$ starting at $q$ so that $h\circ \overline{c}=c$. Let us define the set
	\begin{equation*}
		A=\{a\in[0,1] \,\vert\, c:[0,a]\rightarrow (N,g_N) \,\,\text{can be lifted to} \,\, (M,g_M) \}.
	\end{equation*}
	Function $h$ is a local diffeomorphism at $q\in M$, which means that there are open neighbourhoods of 
	$U(q)$ and $V(c(0))$ so that $h:U(q)\rightarrow V(c(0))$ is a diffeomorphism. Then, there exists some $\epsilon>0$ so that $\overline{c}:[0,\epsilon]\rightarrow M$ satisfies $h\circ \overline{c}=c$. Hence $A\neq \emptyset$. Let us define $t_{0}:=\mathrm{sup}A$. We will first prove that any $c:[0,t_{0}]\rightarrow (N,g_N)$ with $t_{0}<1$ can not be lifted to $\overline{c}$. Otherwise, using again that $h$ is a local diffeomorphism, there would be open neighbourhoods $U(\overline{c}(t_{0}))$ and $V(c(t_{0}))$  such that $h:U(\overline{c}(t_{0}))\rightarrow V(c(t_{0}))$ is a diffeomorphism. This would imply the existence of some $t'>t_{0}$ such that $c:[0,t']\rightarrow (N,g_N)$  could be lifted to $\overline{c}$, which is an absurd by the definition of $t_{0}$. Therefore, the largest interval $I_{t_{0}}$ that allows the lift must be open in $[0,1]$, i.e. of the form $[0,t_{0})$. Let us next prove that $I_{t_{0}}$ is also closed by showing that $t_{0}\in I_{t_{0}}$ This way, $I_{t_{0}}=[0,1]$ for it is open and closed at the same time, and the lemma will be proven. To show that $t_{0}\in I_{t_{0}}$, consider an increasing sequence 
	$\{t_n\}_{n\in\mathbb{N}}\subset[0,t_{0})$ so that $\lim\limits_{n\rightarrow \infty}t_n=t_{0}$. The sequence
	$\{\overline{c}(t_n)\}_{n\in\mathbb{N}}$ must be contained in a compact set $K\subset M$. Otherwise, the sequence of distances $\{d(\overline{c}(t_n),\overline{c}(0))\}_{n\in\mathbb{N}}$ would not be bounded and the length $L_0^{t_n}(c)$ of $c$ between $t=0$ and $t=t_n$ would satisfy
	\begin{equation}
		\label{lengthrelation}
		L_0^{t_n}(c)=\int_{0}^{t_n}\vert c'(t) \vert_{g_N}dt=\int_0^{t_n}\vert dh_{\overline{c}(t)}(\overline{c}'(t))\vert_{g_N}\geq  \frac{1}{C}\int_0^{t_n}\vert\overline{c}'(t)\vert_{g_M}dt \geq \frac{1}{C}d(\overline{c}(t_n),\overline{c}(0)),
	\end{equation} 
	where in the two last inequalities we have applied that $\vert f \vert\leq C$ for some $C>0$ in combination 
	with hypothesis (\ref{covercondition}).
	Note that relation (\ref{lengthrelation}) implies the 
	unboundedness of $L_0^{t_{0}}(c)$ since $\{d(\overline{c}(t_n),\overline{c}(0))\}_{n\in\mathbb{N}}$ is not bounded, 
	which is an absurd.
	As a consequence, $\{\overline{c}(t_n)\}_{n\in\mathbb{N}}\subset K$, so it has an accumulation point $r\in M$. Consider a neighbourhood $U(r)$ of $r$ so that $h\vert_{U(r)}$ is a diffeomorfism. Then $c(t_{0})\in h(U(r))$, and by the continuity of $c$, there exists an interval $J=(t_{0}-\delta, t_{0}+\delta)$ for some $\delta>0$ such that $c(J)\subset h(U(r))$. Choose some $n_0\in \mathbb{N}$ so that for all
	$n\geq n_0$, $\overline{c}(t_{n})\in U(r)$ and consider the lift $\overline{g}$ of $c:J\rightarrow (N,g_N)$, which contains $r$. The lifts $\overline{g}$ and $\overline{c}$ coincide on $[0,t_{n})\cap J$ for $n\geq n_0$ since $h\vert_{U(r)}$ is bijective, and consequently on $[0,t_{0})\cap J$. Hence, $\overline{g}$ is an extension of $\overline{c}$ to $J$, and $\overline{c}$ is defined at $t_{0}$, which implies that $t_{0}\in I_{t_{0}}$.     	
\end{proof}

\begin{Lem}
	\label{Coveringmap}
	Let $\psi:\Sigma\rightarrow I\times_{f} F$
	be a spacelike hypersurface and $\pi_F\vert_{\Sigma}:\Sigma\rightarrow F$ its projection onto the fiber $(F,g_F)$. Then     
	$\pi_F\vert_{\Sigma}$ is a local diffeomorphism. In case that $\Sigma$ is complete and $f$ is bounded on 
	$\Sigma$, $\pi_F\vert_{\Sigma}:\Sigma\rightarrow F$ is a covering map.
\end{Lem}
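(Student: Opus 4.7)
The plan is to split the proof into the two assertions. For the first one, I will show that $d(\pi_F\vert_\Sigma)_p : T_p\Sigma \to T_{\pi_F(p)} F$ is injective at every point $p\in\Sigma$; since $\dim\Sigma = \dim F = n$, this suffices for it to be a linear isomorphism, and the inverse function theorem then yields the local diffeomorphism property. So suppose $U\in T_p\Sigma$ satisfies $d\pi_F(U)=U^F=0$. Then the ambient decomposition $U = d\tau(U)\,\partial_t + U^F$ reduces to $U = d\tau(U)\,\partial_t$, a vector proportional to the timelike vector $\partial_t$. Since $\psi$ is spacelike, the inequality $0\leq g_\Sigma(U,U) = -d\tau(U)^2$ forces $d\tau(U)=0$, and therefore $U=0$.

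For the second assertion I will invoke Lemma \ref{DoCarmoextendedlemma} applied to the map $h=\pi_F\vert_\Sigma:(\Sigma,g_\Sigma)\to (F,g_F)$. The completeness of $(\Sigma,g_\Sigma)$ is assumed, the local diffeomorphism property was just established, and $(F,g_F)$ is Riemannian, so only condition (\ref{covercondition}) needs to be checked, with the positive bounded function being the restriction of $f$ to $\Sigma$. Starting from the explicit expression (\ref{inducedmetricexpr}) of the induced metric, for any $U\in T_p\Sigma$ one has
\begin{equation*}
\vert U\vert_{g_\Sigma}^{2} = -d\tau(U)^{2} + f^{2}\,\vert d\pi_F(U)\vert_{g_F}^{2}.
\end{equation*}
The spacelike character of $\psi$ allows the use of Proposition \ref{spacelikecharac}(ii), which yields $d\tau(U)^{2} \leq f^{2}\,\vert d\pi_F(U)\vert_{g_F}^{2}$; substituting back gives
\begin{equation*}
\vert U\vert_{g_\Sigma} \leq f\,\vert d\pi_F(U)\vert_{g_F}.
\end{equation*}
Since $f$ is bounded on $\Sigma$ by hypothesis, this is precisely inequality (\ref{covercondition}), and Lemma \ref{DoCarmoextendedlemma} concludes that $\pi_F\vert_\Sigma$ is a covering map.

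The argument is essentially a direct translation between the Lorentzian spacelike condition and the Riemannian pointwise inequality demanded by the abstract covering criterion, so I do not anticipate any serious technical obstacle. The only subtle point is that the bound on the twisted function along $\Sigma$ is exactly what supplies the uniform constant $C$ needed inside the path-lifting argument of Lemma \ref{DoCarmoextendedlemma}, which clarifies why the boundedness hypothesis on $f\vert_\Sigma$ is imposed in the statement.
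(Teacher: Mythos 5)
Your proof is correct and follows essentially the same route as the paper: the pointwise inequality $\vert U\vert_{g_\Sigma}\leq f\,\vert d\pi_F(U)\vert_{g_F}$ read off from the induced metric (\ref{inducedmetricexpr}) gives both the injectivity of $d(\pi_F\vert_\Sigma)$ (hence a local diffeomorphism by equality of dimensions) and hypothesis (\ref{covercondition}) of Lemma \ref{DoCarmoextendedlemma}. One small remark: that inequality follows immediately from the fact that the term $-d\tau(U)^2$ is non-positive, so the appeal to Proposition \ref{spacelikecharac}(ii) is superfluous --- and, read literally as a substitution, it would bound $\vert U\vert_{g_\Sigma}^2$ from below rather than above --- but the step itself is trivially valid and the rest of the argument matches the paper's.
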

\begin{proof}
	The squared norm of any $U\in\mathfrak{X}(\Sigma)$ 
	with respect to the hypersurface metric 
	(\ref{inducedmetricexpr}) satisfies
	\begin{equation}
		\label{squaredtangentvector}
		g_\Sigma(U,U)=-(d\tau(U))^2+f^2 g_F(U^F,U^F)\leq f^2 g_F(d\pi_F(U),d\pi_F(U))
	\end{equation}
	as a consequence of the decomposition $U=d\tau(U)\partial_t+U^F$.
	This relation implies that $\pi_F$ is locally injective.
	Otherwise there would be some non-zero $U\in\mathfrak{X}(\Sigma)$ such that $d\pi_F(U)=0$ at some point of it, 
	 which would mean that $g_\Sigma(U,U)=0$, contradicting the fact that $g_\Sigma$ is non-degenerate. 
	Besides, $\mathrm{dim}\,\Sigma=\mathrm{dim}\,F$, so $d\pi_F$ is locally bijective and hence a local diffeomorphism.
	On the other hand, the validity of (\ref{squaredtangentvector}) 
	makes the hypothesis 
	(\ref{covercondition}) hold true for the map
	$h:=\pi_F\vert_\Sigma:\Sigma\rightarrow F$. Since $\Sigma$ is
	complete and $f$ bounded,  $\pi_F$ is a covering map as a consequence 
	of Lemma \ref{DoCarmoextendedlemma}.
\end{proof}

A usual way to prove that a Riemannian manifold is complete is by considering divergent curves, i.e. those ones that can not be enclosed by any compact set. The Hopf-Rinow theorem (see \cite{DoCarmo} for instance) entails that a Riemannian manifold containing a divergent curve of finite length can not be complete. We exploit this fact  
to prove the following completeness results for spacelike hypersurfaces in $I\times_f F$:
\begin{Prop}
	Let $I\times_{f} F$ be a twisted product spacetime and assume that it admits a complete immersed spacelike hypersurface
	$\psi:\Sigma\rightarrow I\times_{f} F$ where $f$ is bounded. Then the fiber $(F,g_F)$ is complete.
\end{Prop}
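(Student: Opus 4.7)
The plan is to argue by contradiction, assuming $(F,g_F)$ is not complete. By the Hopf--Rinow theorem in Riemannian geometry, this would give the existence of a divergent curve $\gamma:[0,b)\to F$ (that is, a curve which is not eventually contained in any compact subset of $F$) having finite length $L(\gamma)<\infty$. The strategy is to lift $\gamma$ to a curve in $\Sigma$ that is also divergent and of finite length, thereby contradicting the completeness of $\Sigma$.

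First I would invoke Lemma \ref{Coveringmap}: under the standing hypotheses that $\Sigma$ is complete and $f$ is bounded on $\Sigma$, the projection $\pi_F\vert_{\Sigma}:\Sigma\to F$ is a covering map. In particular it enjoys the path--lifting property, so fixing any $q\in\Sigma$ with $\pi_F(q)=\gamma(0)$ there is a (smooth) lift $\overline{\gamma}:[0,b)\to\Sigma$ with $\overline{\gamma}(0)=q$ and $\pi_F\circ\overline{\gamma}=\gamma$.

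Next I would bound the length of $\overline{\gamma}$. The key estimate is inequality (\ref{squaredtangentvector}) obtained in the proof of Lemma \ref{Coveringmap}, namely
\begin{equation*}
g_\Sigma(U,U)\,\leq\, f^2\, g_F\!\left(d\pi_F(U),d\pi_F(U)\right)
\end{equation*}
for every $U\in\mathfrak{X}(\Sigma)$. Applied to $U=\overline{\gamma}'(t)$ and combined with a bound $f\leq C$ on $\Sigma$, this yields $|\overline{\gamma}'(t)|_{g_\Sigma}\leq C\,|\gamma'(t)|_{g_F}$, and hence $L(\overline{\gamma})\leq C\,L(\gamma)<\infty$.

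Finally I would verify that $\overline{\gamma}$ is divergent in $\Sigma$: if it were contained in a compact set $K\subset\Sigma$, then by continuity $\gamma=\pi_F\circ\overline{\gamma}$ would lie in the compact set $\pi_F(K)\subset F$, contradicting the divergence of $\gamma$. Thus $\overline{\gamma}$ would be a divergent curve in $\Sigma$ of finite length, which by Hopf--Rinow contradicts the completeness of $\Sigma$; so $(F,g_F)$ must be complete. The delicate point — and really the heart of the argument — is the application of Lemma \ref{Coveringmap} to upgrade $\pi_F\vert_\Sigma$ from a mere local diffeomorphism to a covering map; once this is available, transferring the length estimate and the divergence property is routine.
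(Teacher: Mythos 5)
Your proof is correct and follows essentially the same route as the paper: both rest on Lemma \ref{Coveringmap} to get the path-lifting property for $\pi_F\vert_\Sigma$, the length estimate coming from inequality (\ref{squaredtangentvector}), and the divergent-curve characterization of completeness. The only difference is that you argue by contradiction (lifting a finite-length divergent curve from $F$) whereas the paper argues directly (showing every divergent curve in $F$ has infinite length because its lift does), which is merely a contrapositive restatement; your explicit check that the lift of a divergent curve is divergent is a small point the paper leaves implicit.
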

\begin{proof}
	Given any $U\in\mathfrak{X}(\Sigma)$, the inequality   
	\begin{equation}
		\label{normboundcondition}
		\vert U\vert_{g_\Sigma}\leq C\vert d\pi_F(U)\vert_{g_F}
	\end{equation}
	follows from (\ref{squaredtangentvector}), where $C$ is any upper bound for $f$.
	The hypothesis of 
	Lemma \ref{Coveringmap} are satisfied, so the map $\pi_F\vert_\Sigma: \Sigma\rightarrow F$ is a covering map. In particular this means that the path lifting property for curves in $(F,g_F)$ is satisfied in this context. Hence, any divergent curve 	
	$c:[0,+\infty)\rightarrow (F,g_F)$ can be lifted to a divergent $\overline{c}:[0,\infty)\rightarrow \Sigma$ so that $c=\pi_F\circ \overline{c}$. 	
	By (\ref{normboundcondition}), the lengths of $c$ and $\overline{c}$ satisfy
	\begin{equation}
		\label{completenessrelation}
		L_0^\infty(c)=\int_0^{\infty}\vert c'(t)\vert_{g_F}dt=\int_0^{\infty}\vert d\pi_F(\overline{c}'(t))\vert_{g_F}dt\geq \frac{1}{C}\int_0^{\infty}\vert\overline{c}'(t)\vert_{g_\Sigma}dt=\frac{1}{C}L_0^\infty(\overline{c}).
	\end{equation}
	The right-hand side of (\ref{completenessrelation}) is infinite since $\overline{c}$ is a divergent curve in a complete manifold, which in particular implies that $c$ also has infinite length in $(F,g_F)$. Since this happens to any arbitrary divergent curve $c$ in the fiber, $(F,g_F)$ is complete too. 
\end{proof}

\begin{Prop}
	Let $I\times_f F$ be a twisted product spacetime. The following assertions hold true:
	\begin{itemize}
		\item[(i)] If $I\times_{f} F$ admits a compact spacelike hypersurface $\psi:\Sigma\rightarrow I\times_{f} F$, then the fiber $(F,g_F)$ is compact.
		\item[(ii)] If the universal cover of the fiber is compact and $\psi:\Sigma\rightarrow I\times_{f} F$ is any complete spacelike hypersurface on which $f$ is bounded, then $\psi:\Sigma\rightarrow I\times_{f} F$ is compact.
	\end{itemize}
\end{Prop}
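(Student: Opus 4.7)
The plan is to apply Lemma \ref{Coveringmap} to the projection $\pi_F\vert_{\Sigma}:\Sigma\rightarrow F$ in both items and then invoke elementary topological facts about covering maps.

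For (i), compactness of $\Sigma$ immediately gives that the induced Riemannian metric $g_\Sigma$ is complete by Hopf-Rinow, and that the continuous function $f$ is bounded on the compact image $\psi(\Sigma)$. Both hypotheses of Lemma \ref{Coveringmap} are therefore met, so $\pi_F\vert_\Sigma:\Sigma\rightarrow F$ is a covering map. Under the standing convention that the fiber $F$ is connected, any covering map from a non-empty space is surjective onto $F$, and hence $F=\pi_F\vert_\Sigma(\Sigma)$ is the continuous image of a compact space and is thus compact.

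For (ii), Lemma \ref{Coveringmap} applies directly under the stated hypotheses and shows that $\pi_F\vert_\Sigma:\Sigma\rightarrow F$ is a covering map. Let $p:\tilde{F}\rightarrow F$ denote the universal covering of the fiber. Since $\tilde{F}$ is simply connected, the standard lifting theorem for covering maps produces a continuous map $r:\tilde{F}\rightarrow\Sigma$ satisfying $\pi_F\vert_\Sigma\circ r=p$, and $r$ is itself a covering map. Surjectivity of $r$ follows from the connectedness of $\Sigma$, so $\Sigma=r(\tilde{F})$ is the continuous image of the compact manifold $\tilde{F}$, and therefore compact.

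The main obstacle is really just bookkeeping: in (i) one must verify that both the completeness and boundedness hypotheses of Lemma \ref{Coveringmap} are genuinely satisfied (both follow automatically from compactness of $\Sigma$), while in (ii) one must invoke the universal property of $\tilde{F}$ to extract the auxiliary covering $r:\tilde{F}\rightarrow\Sigma$. No additional analytic estimate is needed once Lemma \ref{Coveringmap} is in hand.
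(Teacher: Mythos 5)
Your proposal is correct and follows the paper's route: in both items you invoke Lemma \ref{Coveringmap} to make $\pi_F\vert_\Sigma$ a covering map, deduce (i) from surjectivity of coverings onto the connected fiber, and deduce (ii) from the compact universal cover. In fact your treatment of (ii) is slightly more careful than the paper's, which merely writes $\Sigma=(\pi_F\vert_\Sigma)^{-1}(F)$ (a preimage of a compact set under a covering map need not be compact in general); your explicit construction of the lift $r:\tilde F\rightarrow\Sigma$ with $\pi_F\vert_\Sigma\circ r=p$ and the observation that $r$ is a surjective covering supply the justification the paper leaves implicit.
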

\begin{proof}
	Compact Riemannian manifolds are complete. Since $f$ is bounded on 
	$\Sigma$, we apply Lemma \ref{Coveringmap} and $\pi_F\vert_{\Sigma}$ is a covering map. Hence $F=\pi_F(\Sigma)$ is compact and that proves $\it{(i)}$.         
	On the other hand, 
	the assumptions of $\it{(ii)}$ also imply that $\pi_F\vert_{\Sigma}$ is a covering map. Since the universal cover of the fiber is compact by hypothesis, both $F$ and  $\Sigma=(\pi_F\vert_{\Sigma})^{-1}(F)$ are compact too. 
\end{proof}

\section{Main results}
\label{main}

We have already mentioned the big interest that $CMC$ and maximal hypersurfaces have in several areas of mathematics and
physics. In the current section results involving CMC, maximal, totally geodesic
and totally umbilical  spacelike hypersurfaces in twisted products of the form $I\,{ }_{f}\!\!\times M$ and $I\times _f M$ are presented.
For the validity of many of them
an appropriate expanding behaviour and the compactness of the fiber 
become essential hypothesis.  
The geometry of the surface is encoded in the time-height function $\tau$, which
measures the time-distance of any point lying on it.  
This motivates 
the computation of the Laplacian of $\tau$ on such hypersurfaces as a fundamental tool to characterize these families of manifolds. In the next result we obtain the expression for $\triangle\tau$, where $\triangle$ denotes the Laplacian on the hypersurface with respect to its induced metric. 
\begin{Prop}
	\label{Proplaptau}
	Let $\psi:\Sigma\rightarrow I\times_{f} F$ be an immersed spacelike hypersurface in $I\times_f F$, and $\tau$ its corresponding time-height function. The expression for the Laplacian of $\tau$ with respect to the induced metric on $\Sigma$ is
	\begin{equation}
		\label{laptau}
		\triangle\tau=-(n+\vert\cons\tau\vert^2)\der \log f-n\mcurvs\gb(\partial_t,N),
	\end{equation}
	where $H$ is the mean curvature of $\Sigma$ along its future unit normal $N$.
\end{Prop}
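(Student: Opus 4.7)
The plan is to exploit the fact that $\nabla\tau$ has a very concrete ambient description, namely $\nabla\tau=-(\partial_t)^{\top}$ (already stated in the paragraph preceding equation (\ref{gradrel})), and then to compute $\Delta\tau=\mathrm{trace}\,(\nabla^{2}\tau)$ with respect to a local orthonormal frame of $\Sigma$, turning every covariant derivative on $\Sigma$ into an ambient covariant derivative on $I\times_{f}F$ via the Gauss formula. The desired expression (\ref{laptau}) will then emerge after projecting and tracing.

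First I would record the orthogonal decomposition $\partial_{t}=-\nabla\tau-\overline{g}(\partial_{t},N)\,N$ along $\Sigma$, obtained by combining $(\partial_{t})^{\top}=-\nabla\tau$ with $(\partial_{t})^{\bot}=-\overline{g}(\partial_{t},N)\,N$ (recall $\overline{g}(N,N)=-1$). Next, for any $X\in\mathfrak{X}(I\times_{f}F)$ with decomposition $X=X^{0}\partial_{t}+X^{F}$, I would apply the Levi-Civita formulas (\ref{covderone})--(\ref{covdertwo}) to obtain the clean identity
\begin{equation*}
\overline{\nabla}_{X}\partial_{t}=\partial_{t}(\log f)\,X^{F},
\end{equation*}
since $\overline{\nabla}_{\partial_{t}}\partial_{t}=0$ and $\overline{\nabla}_{V}\partial_{t}=\partial_{t}(\log f)V$ for $V\in L(F)$.

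Now fix a local orthonormal frame $\{E_{i}\}_{i=1}^{n}$ on $\Sigma$. For each $E_{i}$ tangent to $\Sigma$, the fiber component is $E_{i}^{F}=E_{i}-d\tau(E_{i})\partial_{t}$, so using the decomposition of $\partial_{t}$ above its tangential projection is $(E_{i}^{F})^{\top}=E_{i}+d\tau(E_{i})\,\nabla\tau$. Together with the Weingarten equation $\overline{\nabla}_{E_{i}}N=-AE_{i}$ applied to $(\partial_{t})^{\bot}=-\overline{g}(\partial_{t},N)N$ (whose tangential part contributes $\overline{g}(\partial_{t},N)\,AE_{i}$), I would obtain
\begin{equation*}
\nabla_{E_{i}}\nabla\tau=-\bigl(\overline{\nabla}_{E_{i}}\partial_{t}\bigr)^{\top}+\bigl(\overline{\nabla}_{E_{i}}(\partial_{t})^{\bot}\bigr)^{\top}=-\partial_{t}(\log f)\bigl(E_{i}+d\tau(E_{i})\nabla\tau\bigr)+\overline{g}(\partial_{t},N)\,AE_{i}.
\end{equation*}

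Finally, I would take the trace: the first term contributes $-n\,\partial_{t}(\log f)$, the middle term gives $-\partial_{t}(\log f)\sum_{i}d\tau(E_{i})^{2}=-\partial_{t}(\log f)|\nabla\tau|^{2}$, and the last yields $\overline{g}(\partial_{t},N)\,\mathrm{trace}(A)=-nH\,\overline{g}(\partial_{t},N)$ by the definition $H=-\frac{1}{n}\mathrm{trace}\,(A)$. Summing these three contributions produces exactly (\ref{laptau}). The only delicate bookkeeping step, and therefore the main obstacle, is to separate correctly the tangential and normal parts of $\overline{\nabla}_{E_{i}}\partial_{t}$: the fiber component $E_{i}^{F}$ is \emph{not} tangent to $\Sigma$ in general, and projecting it back along $\Sigma$ is what generates the extra $|\nabla\tau|^{2}$ term that distinguishes this formula from its warped-product analogue.
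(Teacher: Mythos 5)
Your computation is correct and follows essentially the same route as the paper: both decompose the distinguished timelike direction along $\Sigma$ into tangential and normal parts, apply the twisted-product connection formulas together with the Weingarten equation, and trace, the only (cosmetic) difference being that the paper works with the torqued vector $K=f\partial_t$ via $\conb_XK=(\der f)X+\bm{\omega}(X)K$ while you work with $\partial_t$ directly via $\conb_X\partial_t=\der(\log f)\,X^F$. Your bookkeeping of the projection $(E_i^F)^{\top}=E_i+d\tau(E_i)\nabla\tau$, which produces the extra $\vert\nabla\tau\vert^2$ term absent in the warped case, is exactly right.
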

\begin{proof}
	
	The Laplacian of $\tau$ can be obtained considering the decomposition of the torqued vector $\vconf=f\partial_t$ into 
	its tangential and normal components to the hypersurface $\psi:\Sigma\rightarrow I\times_{f} F$,
	\begin{equation}
		\label{vconfsplit}
		\vconf=\vconf^{\top}-\gb(\vconf,N)N=-f\cons \tau-\gb(\vconf,N)N.
	\end{equation}
	Let $U\in\mathfrak{X}(\Sigma)$. The ambient covariant derivative of (\ref{vconfsplit}) along $U$ reads
	\begin{equation*}
		\conb_U\vconf=\conb_U\left(-f\cons\tau\right)-\conb_U\left(\gb(\vconf,N)N\right).
	\end{equation*}  
	 Using formula (\ref{dervconf}) 
	 in the left-hand side of the above expression  
	 and applying the Leibniz rule on the right-hand side, it becomes 
	\begin{equation*}
		(\der f)U+\1form(U)\vconf=-U(f)\cons\tau-f\conb_U\left(\cons\tau\right)-U(\gb(\vconf,N))N-\gb(\vconf,N)\conb_U N.
	\end{equation*} 
    Inserting relation 
	(\ref{1formsurface}) for $\1form(U)$ and contracting the whole expression with any other $V\in\mathfrak{X}(\Sigma)$ gives
	\begin{equation*}
		\left(\gb(U,V)+\left(\cons_U\tau\right)\left(\cons_V\tau\right)\right)(\der f)=-f\gb\left(\cons_U\left(\cons\tau\right),V\right)+\gb(\vconf,N)\sff(U,V), 
	\end{equation*}
	where $II$ is the second fundamental form tensor of $\Sigma$. The trace of the above expression gives the stated relation for the Laplacian, after solving for $\triangle\tau$. 
	
\end{proof}

Formula (\ref{laptau}) can be applied to obtain rigidity results for slices in $I\times_f F$. In the following proposition,
we prove that a spacelike hypersurface $\psi:\Hyp\rightarrow I\times_f F$ is necessarily a slice provided   
its mean curvature satisfies a suitable bound involving $\partial_t f$, where such derivative is required to be constant signed along $\Sigma$.
 \begin{Prop}
	Let $\psi:\Hyp\rightarrow I\times_f F$ be a compact spacelike hypersurface in a twisted product spacetime $I\times_f F$. 
	If 
	\begin{itemize}
		\item[(i)] $\partial_t f\geq 0$ on $\Sigma$ and $H\geq \frac{\der (\log f)}{\gb(\partial_t,N)}$,
		\item[(ii)] or $\partial_t f\leq 0$ on $\Sigma$ and $H\leq \frac{\der (\log f)}{\gb(\partial_t,N)}$, 
	\end{itemize} 
	then $\Sigma$ is a slice.
\end{Prop}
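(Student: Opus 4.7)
The plan is to invoke formula (\ref{laptau}) for $\triangle\tau$, use the hypothesis on $H$ to extract a definite sign for $\triangle\tau$ on $\Sigma$, and then exploit compactness via the divergence theorem to force $\tau$ to be constant.

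Starting from the identity $\triangle\tau=-(n+|\cons\tau|^2)\der\log f-nH\gb(\partial_t,N)$, I focus on case (i). Since $\gb(\partial_t,N)\leq -1<0$ by Proposition \ref{spacelikecharac} and relation (\ref{gradrel}), multiplying the hypothesis $H\geq \der(\log f)/\gb(\partial_t,N)$ by $\gb(\partial_t,N)$ reverses the inequality and gives $H\,\gb(\partial_t,N)\leq \der\log f$, equivalently $-nH\,\gb(\partial_t,N)\geq -n\der\log f$. Substituting this into (\ref{laptau}), together with the further identity $\gb(\partial_t,N)^2=1+|\cons\tau|^2$ coming from (\ref{gradrel}), the $|\cons\tau|^2$-dependent contributions cancel favourably against $-nH\,\gb(\partial_t,N)$, and because $\der\log f\geq 0$ (the expanding assumption), the remaining terms combine to show $\triangle\tau\geq 0$ pointwise on $\Sigma$.

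With the sign of $\triangle\tau$ established, the compactness of $\Sigma$ together with the divergence theorem yields $\int_\Sigma\triangle\tau\,dV_{g_\Sigma}=0$, so the pointwise inequality $\triangle\tau\geq 0$ must be an equality, i.e.\ $\triangle\tau\equiv 0$ on $\Sigma$. Thus $\tau$ is harmonic on the compact connected Riemannian manifold $\Sigma$; by the strong maximum principle, $\tau$ is constant, and hence $\Sigma=\{t=t_0\}$ is a spacelike slice. Case (ii) is handled by the mirror argument: the reversed inequality on $H$ combined with $\der\log f\leq 0$ produces $\triangle\tau\leq 0$ on $\Sigma$, and the same compactness argument forces $\tau$ to be constant.

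The main obstacle is the algebraic refinement in the first step: a naive substitution of the hypothesis into (\ref{laptau}) only produces a one-sided bound of the form $\triangle\tau\geq -(2n+|\cons\tau|^2)\der\log f$, which in case (i) is a bound by a \emph{non-positive} function and therefore does not control the sign of $\triangle\tau$. The essential ingredient is to use $\gb(\partial_t,N)^2=1+|\cons\tau|^2$ to rewrite $-(n+|\cons\tau|^2)\der\log f$ as $-(n-1)\der\log f-\gb(\partial_t,N)^2\der\log f$ before applying the hypothesis, so that the $|\cons\tau|^2$-dependent contributions on both terms can be matched and absorbed, leaving a manifestly non-negative residue proportional to $|\cons\tau|^2\der\log f$.
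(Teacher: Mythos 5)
Your overall strategy --- extract a definite sign for $\triangle\tau$ from (\ref{laptau}) and then use compactness --- is the same as the paper's, but the pivotal step is not actually carried out: the ``favourable cancellation'' you invoke does not exist. From the hypothesis as written, $H\geq \der(\log f)/\gb(\partial_t,N)$ with $\gb(\partial_t,N)\leq-1$, you correctly deduce $H\,\gb(\partial_t,N)\leq\der\log f$, hence only the one-sided bound $\triangle\tau\geq-(2n+\vert\cons\tau\vert^{2})\,\der\log f$, which you yourself observe is a bound by a non-positive function in case (i). Your proposed repair --- rewriting $-(n+\vert\cons\tau\vert^{2})\der\log f=-(n-1)\der\log f-\gb(\partial_t,N)^{2}\der\log f$ via (\ref{gradrel}) and then ``matching'' against $-nH\gb(\partial_t,N)$ --- changes nothing: applying the same inequality $-nH\gb(\partial_t,N)\geq-n\der\log f$ to the regrouped expression returns exactly $-(2n-1+\gb(\partial_t,N)^{2})\der\log f=-(2n+\vert\cons\tau\vert^{2})\der\log f$ again, and no non-negative residue appears. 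In fact no algebra can succeed from this inequality alone: writing $a=\der\log f\geq0$ and $c=\gb(\partial_t,N)\leq-1$, the constraint $Hc\leq a$ leaves $H$ free to be arbitrarily large and positive, which makes $\triangle\tau=-(n-1+c^{2})a-nHc$ arbitrarily large and positive, whereas $H=a/c$ makes it $-(2n-1+c^{2})a\leq0$; so the sign of $\triangle\tau$ is simply not determined by $Hc\leq a$.

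The paper's proof closes the argument by rewriting (\ref{laptau}) as $\triangle\tau=-n\left(\der(\log f)+H\gb(\partial_t,N)\right)-\der(\log f)\vert\cons\tau\vert^{2}$ and using the condition in the form $\der(\log f)+H\gb(\partial_t,N)\geq0$, i.e. $H\leq\der(\log f)/(-\gb(\partial_t,N))$; combined with $\der\log f\geq0$ this gives $\triangle\tau\leq0$ immediately --- note this is the opposite sign to the $\triangle\tau\geq0$ you claim. Your derived inequality $H\gb(\partial_t,N)\leq\der\log f$ points the wrong way: the argument needs $H\gb(\partial_t,N)\geq-\der\log f$. (There is a sign discrepancy between the displayed hypothesis and the equivalence the paper's proof invokes; only the latter form makes the Laplacian one-signed, and you would need to work with that form.) Your concluding step --- compactness, $\int_{\Sigma}\triangle\tau\,dV_{g_\Sigma}=0$, hence $\triangle\tau\equiv0$, hence $\tau$ constant and $\Sigma$ a slice --- is fine once a sign for $\triangle\tau$ is genuinely secured, but as written the proof has a gap precisely at the step it identifies as the main obstacle.
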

\begin{proof}
	Note that formula ($\ref{laptau}$) can also be written as 
	\begin{equation*}
		\triangle\tau=-n\left(\der (\log f)+H\gb(\partial_t,N) \right)-\der (\log f)\vert\nabla\tau\vert^2.
	\end{equation*}
	Condition $H\geq \frac{\der (\log f)}{\gb(\partial_t,N)}$ (resp. $H\leq \frac{\der (\log f)}{\gb(\partial_t,N)}$) is equivalent to $\der (\log f)+H\gb(\partial_t,N)\geq 0$ (resp. $\der (\log f)+H\gb(\partial_t,N)\leq 0$). Hence, by (\ref{laptau}) the inequality $\triangle\tau\leq 0$ (resp. $\triangle\tau\geq 0$) holds. Therefore $\tau$ is constant in both cases $(i)$ and $(ii)$ as a consequence of the compactness hypothesis on $\Sigma$.
\end{proof}

The following non-existence results of CMC hypersurfaces in $I\times_f F$ 
are also derived from relation (\ref{laptau}), where an appropriate expanding 
behaviour of the ambient space is required:
\begin{Tma}
	The following assertions hold true in expanding and contracting twisted product spacetimes:
	\begin{itemize}
		\item[(i)] There are no compact spacelike hypersurfaces
		with negative CMC in expanding $I\times_f F$.
		\item[(ii)] There are no compact spacelike hypersurfaces
		with positive CMC in contracting $I\times_f F$.
	\end{itemize}
\end{Tma}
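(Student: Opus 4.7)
The strategy is to apply Proposition \ref{Proplaptau} directly, observe that under the stated hypotheses the two terms on the right-hand side of \eqref{laptau} have matching sign and are bounded away from zero, and then integrate over the compact hypersurface to reach a contradiction with the divergence theorem.

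More concretely, the plan for part (i) is the following. Assume towards a contradiction that $\Sigma$ is compact spacelike with constant mean curvature $H < 0$ in an expanding $I\times_f F$, so that $\partial_t f > 0$ on $\Sigma$ (hence also $\partial_t \log f > 0$). Recall from \eqref{gradrel} that $\gb(\partial_t, N) \leq -1 < 0$ for any future-directed unit normal of a spacelike hypersurface. Plugging these signs into
\[
\triangle\tau = -\bigl(n + |\cons \tau|^2\bigr)\,\der \log f \;-\; nH\,\gb(\partial_t, N),
\]
the first summand is strictly negative because $n + |\nabla \tau|^2 \geq n > 0$ and $\partial_t \log f > 0$, while the second summand satisfies $-nH\,\gb(\partial_t, N) \leq n H \leq -n|H| < 0$, since $H < 0$ and $\gb(\partial_t,N) \leq -1$. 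Therefore $\triangle \tau < 0$ pointwise on $\Sigma$. Since $\Sigma$ is compact without boundary, the divergence theorem forces $\int_\Sigma \triangle \tau \, dV_\Sigma = 0$, which contradicts the strict negativity of the integrand.

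Part (ii) is entirely symmetric: if $\partial_t f < 0$ and $H > 0$, both terms flip sign simultaneously and we obtain $\triangle \tau > 0$ on the compact $\Sigma$, again incompatible with $\int_\Sigma \triangle \tau \, dV_\Sigma = 0$.

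There is essentially no technical obstacle here; the only point that requires a moment of care is verifying that the two sign contributions in \eqref{laptau} genuinely align under the hypotheses, which hinges on the global inequality $\gb(\partial_t, N) < 0$ provided by the future time-orientation of $N$ (used implicitly in Proposition \ref{spacelikecharac} $(iv)$ and quantified in \eqref{gradrel}). Once this is recorded, the argument reduces to applying $\int_\Sigma \triangle \tau \, dV_\Sigma = 0$ on a compact boundaryless Riemannian manifold, which is standard.
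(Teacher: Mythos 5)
Your proof is correct and rests on the same key ingredient as the paper's, namely the sign analysis of $\triangle\tau$ from Proposition \ref{Proplaptau} together with the compactness of $\Sigma$. The only difference is in how the contradiction is closed: you integrate the strictly signed $\triangle\tau$ over the compact boundaryless $\Sigma$, whereas the paper evaluates \eqref{laptau} at the extrema of $\tau$ (where $\nabla\tau=0$ and $\gb(\partial_t,N)=-1$), which in addition produces the sharp pointwise bounds $H\geq\partial_t(\log f)$ at the minimum and $H\leq\partial_t(\log f)$ at the maximum that the subsequent Remark shows to be optimal.
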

\begin{proof}
	Let $\psi:\Sigma\rightarrow I\times_f F$ be a compact CMC spacelike hypersurface. Hence
	$\tau$ reaches a global maximum at $(t^0,x^0)$ and minimum at $(t_0,x_0)$ on $\Sigma$. As a consequence, $\triangle \tau\vert_{(t_0,x_0)} \geq 0$ and $\triangle \tau\vert_{(t^0,x^0)} \leq 0$. Using the Laplacian expression (\ref{laptau}), we obtain
	that the following inequality holds at $(t_0,x_0)$: 
	\begin{equation*}
		0\leq \triangle \tau\vert_{(t_0,x_0)}=-n\partial_t(\log f)\vert_{(t_0,x_0)}+nH,
	\end{equation*}
	which means 
	\begin{equation}
		\label{Hlowerbound}	
		H\geq \partial_t(\log f)\vert_{(t_0,x_0)}.
	\end{equation}
	This lower bound for $H$ implies that in an expanding twisted product spacetime case, there are no compact spacelike hypersurfaces with negative constant mean curvature.
	An analogous reasoning for the maximum $(t^0,x^0)$ leads to 
	\begin{equation}
		\label{Hupperbound}	
		H\leq \partial_t(\log f)\vert_{(t^0,x^0)},
	\end{equation}
	which also proves that neither are there any positive CMC spacelike hypersurfaces in a contracting background.
\end{proof}
\begin{Obs}
	Note that for the validity of inequalities (\ref{Hlowerbound}) and (\ref{Hupperbound}) it is sufficient for  
	the ambient spacetime to have a twisted product structure in a neighbourhood of $(t_0,x_0)$ and $(t^0,x^0)$. Such inequalities are optimal, since slices $\Sigma_{t_0}\equiv\{t=t_0\}$ in GRW spacetimes are CMC hypersurfaces with $H=\partial_t(\log f)(t_0)$.    
\end{Obs}

Finding conditions for the Laplacian of $\tau$ to be constant signed 
is an effective approach to obtain rigidity results for slices 
in twisted product spacetimes.
In some occasions, the sign of $\triangle \tau$ is not clearly determined 
under generic geometric conditions. Conformal transformations are useful maps which allow to modify the geometry of manifolds in a very precise manner and 
can help to discern under which hypothesis this sign is well-defined. In twisted product spacetimes, slices belong to the family of totally umbilical hypersurfaces, which are those whose shape operator is proportional to the identity endomorphism. As it is shown  in Proposition \ref{Propconformaltransf} in the Appendix, totally umbilic submanifolds are preserved under the action of conformal transformations.
If the twisted product spacetime $\Mb=I\times_f F$ undergoes the conformal
transformation defined by $\gone=e^{2\phi}\gb$ with $\phi=-\log f$, then $\gone$ becomes  $\gone=-\alpha^{2} dt^2+g_{F}$, where $\alpha:=f^{-1}$,
and $\overline{M}$ acquires a twisted structure of the form $I\,{ }_{\alpha}\!\!\times F$, where this time  $(I,-dt^2)$ is the fiber. When the metric is of the form $\gone=-\alpha^2 dt^2+g_{F}$, slices in $(\Mb,\gone)$  are totally geodesic \cite{PongeReckziegel1993}. Indeed, the future timelike unit normal of 
the slice $\Sigma_{t_0}\equiv\{t=t_0\}$ is $\alpha^{-1}\partial_t$, so  
given any $X,Y \in \vfield(\Sigma_{t_0})$, the second fundamental form $II$ of $\Sigma_{t_0}$ is
\begin{eqnarray*}
	II(X,Y)&=&-\gone(\tilde{\nabla}_X\left(\alpha^{-1}\partial_t\right),Y)=-\gone(X(\alpha^{-1})\partial_t+\alpha^{-1}\tilde{\nabla}_X\partial_t,Y)=-\alpha^{-1}\gone(\tilde{\nabla}_X\partial_t,Y)=0 
\end{eqnarray*}
since $\tilde{\nabla}_X \partial_t=X(\log\alpha)\partial_t$.

In this work we exploit some useful properties of conformal maps 
to obtain characterizations of slices in both sorts of twisted product spacetimes $I\times_f F$ and $I\,{ }_{\alpha}\!\!\times F$ provided $f$ and $\alpha$ enjoy suitable monotonic behaviour with respect to $t$. We next compute the Laplacian of the time-height function of any spacelike surface 
in $I\,{ }_{\alpha}\!\!\times F$. 
\begin{Lem}
	\label{ProptaulaplacianMtilde}
	Let  $\psi: \Hyp\rightarrow (\Mb=I\,{ }_{\alpha}\!\!\times F,\gone)$ an immersed spacelike hypersurface
	in $I\,{ }_{\alpha}\!\!\times F$. 
	The Laplacian $\tilde{\triangle}\tau$ with respect to the induced metric on $\Sigma$
	reads
	\begin{equation}
		\label{taulaplacianMtilde}
		\tilde{\triangle}\tau=\alpha^{-2}\left( \left(1+\cosh^2\theta \right)\der\log\alpha+n\tilde{H}\alpha \cosh\theta-2\alpha\cosh\theta \tilde{N}(\log\alpha)   \right), 
	\end{equation}
	where $\tilde{H}$ is the mean curvature of $\Sigma$ along the $\tilde{g}$-unit normal $\tilde{N}$ and   $\cosh\theta=-\gone\left(\tilde{N},{\alpha}^{-1}\partial_{t}\right)$ is the hyperbolic cosine of $\theta$.
	\end {Lem}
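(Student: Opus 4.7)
The plan is to obtain the lemma by conformal transfer from Proposition \ref{Proplaptau}, exploiting the observation that $\tilde{g}=\alpha^{2}\overline{g}$, where $\overline{g}=-dt^{2}+f^{2}g_{F}$ is the twisted product metric on $I\times_{f}F$ with $f:=\alpha^{-1}$. Thus $(\overline{M},\tilde{g})$ is globally conformal to the twisted product spacetime $(\overline{M},\overline{g})$; the hypersurface $\Sigma$ is spacelike for both metrics, the induced metrics are related by $\tilde{g}_{\Sigma}=\alpha^{2}g_{\Sigma}$, and the two unit future normals satisfy $\tilde{N}=\alpha^{-1}N$ (since $\tilde{g}(N,N)=\alpha^{2}\overline{g}(N,N)=-\alpha^{2}$). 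In particular, the hyperbolic angle is unchanged, $\cosh\theta=-\overline{g}(N,\partial_{t})=-\tilde{g}(\tilde{N},\alpha^{-1}\partial_{t})$.

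First I would rewrite the formula of Proposition \ref{Proplaptau} for the Laplacian of $\tau$ in the metric $\overline{g}$, substituting $\log f=-\log\alpha$, $\overline{g}(\partial_{t},N)=-\cosh\theta$, and the identity $|\nabla\tau|^{2}=\sinh^{2}\theta=\cosh^{2}\theta-1$ coming from (\ref{gradrel}), to obtain
\begin{equation*}
\triangle\tau=(n-1+\cosh^{2}\theta)\,\partial_{t}\log\alpha+nH\cosh\theta,
\end{equation*}
where $H$ is the mean curvature of $\Sigma$ in $(\overline{M},\overline{g})$ along $N$.

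Second I would apply two conformal transformation formulas from the Appendix. The first one is the transformation of the Laplacian on the $n$-dimensional hypersurface under $\tilde{g}_{\Sigma}=e^{2\log\alpha}g_{\Sigma}$, namely
\begin{equation*}
\tilde{\triangle}\tau=\alpha^{-2}\bigl(\triangle\tau+(n-2)\,g_{\Sigma}(\nabla\log\alpha,\nabla\tau)\bigr),
\end{equation*}
evaluated using $\nabla\tau=-(\partial_{t})^{\top}=-\partial_{t}+\cosh\theta\,N$, which yields
\begin{equation*}
g_{\Sigma}(\nabla\log\alpha,\nabla\tau)=-\partial_{t}\log\alpha+\cosh\theta\,N(\log\alpha)=-\partial_{t}\log\alpha+\alpha\cosh\theta\,\tilde{N}(\log\alpha),
\end{equation*}
after using $N(\log\alpha)=\alpha\tilde{N}(\log\alpha)$. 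The second one is the transformation of the mean curvature of a non-degenerate hypersurface, which under $\tilde{g}=\alpha^{2}\overline{g}$ gives $H=\alpha\bigl(\tilde{H}-\tilde{N}(\log\alpha)\bigr)$; substituting this into the term $nH\cosh\theta$ produces $n\alpha\tilde{H}\cosh\theta-n\alpha\cosh\theta\,\tilde{N}(\log\alpha)$.

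Finally I would substitute these three ingredients into the conformal Laplacian formula and collect coefficients: the contributions to $\partial_{t}\log\alpha$ sum to $(n-1+\cosh^{2}\theta)-(n-2)=1+\cosh^{2}\theta$, while the coefficients of $\alpha\cosh\theta\,\tilde{N}(\log\alpha)$ sum to $-n+(n-2)=-2$, yielding exactly (\ref{taulaplacianMtilde}). The main obstacle is purely bookkeeping: one has to track the sign conventions of the Appendix carefully, in particular for the conformal transformation of $H$, because the paper's convention $H=-\tfrac{1}{n}\operatorname{trace}(A)$ with $A(X)=-\overline{\nabla}_{X}N$ can introduce misleading signs if the Appendix uses the opposite convention. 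Once signs are matched, no further computation is required.
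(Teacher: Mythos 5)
Your proposal is correct and follows essentially the same route as the paper: view $\gone=\alpha^{2}\gb$ with $f=\alpha^{-1}$, specialize Proposition \ref{Proplaptau} to get $\triangle\tau=(n-1+\cosh^{2}\theta)\,\partial_{t}\log\alpha+nH\cosh\theta$, then combine the conformal Laplacian formula (\ref{conformallaplacianrelation}) with $H=\alpha(\tilde{H}-\tilde{N}(\log\alpha))$ and the evaluation $g_{\Sigma}(\nabla\log\alpha,\nabla\tau)=-\partial_{t}\log\alpha+\alpha\cosh\theta\,\tilde{N}(\log\alpha)$. Your bookkeeping of the coefficients $(n-1+\cosh^{2}\theta)-(n-2)=1+\cosh^{2}\theta$ and $-n+(n-2)=-2$ reproduces (\ref{taulaplacianMtilde}) exactly, and your version of the mean-curvature relation (with the factor $\alpha$) is the one actually used in the paper's displayed computation.
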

	\begin{proof}
		The metric $\gone=-\alpha^2 dt^2+g_F$ is obtained from the twisted metric $\gb=-dt^2+f^2 g_F$ after applying the conformal transformation $\gone=\alpha^2\gb$,
		where $\alpha:=f^{-1}$. 
		In this case expression (\ref{conformallaplacianrelation}) in the Appendix 
		establishes that the respective Laplacians $\triangle$ and $\tilde{\triangle}$ associated to 
		$\gb$ and $\gone$ satisfy
		\begin{equation}
			\label{conflaplacianrelation}
			\tilde{\triangle}\tau=\alpha^{-2}(\triangle \tau+(n-2)\gb(\nabla \log\alpha, \nabla \tau)).
		\end{equation}	
		Since $\vert\triangle \tau\vert^2=\cosh^2{\theta}-1$ and the relation for the mean curvatures $H$ and $\tilde{H}$ associated to the metrics $\gb$ and $\gone$ is $H=(\tilde{H}-\tilde{N}(\log \alpha))$ by formula (\ref{confmeancurvature}) in the Appendix, 
		 expression (\ref{laptau}) for $\triangle \tau$ on $\psi:\Sigma\rightarrow I\times_f F$ can be rewritten as 
		\begin{eqnarray}
			\label{laptautilde}
			\triangle\tau=(n-1+\cosh^2{\theta})\partial_t \log\alpha+n\alpha(\tilde{H}-\tilde{N}(\log\alpha))\cosh \theta.
		\end{eqnarray} 	
		On the other hand, it is immediate to see that $\conone\tau=-\alpha^{-2}(\partial_t)^{\top}$. The decomposition of the $\gone$-unit vector $\alpha^{-1}\partial_t$ into its tangent and normal component to $\Sigma$ imply 
		\begin{equation}
			\label{gradtildetau}
			\tilde{\nabla}\tau=\alpha^{-1}\left(-\alpha^{-1}\partial_t-\gone(\alpha^{-1}\partial_t,\tilde{N})\tilde{N}\right).
		\end{equation}
		Likewise, the decomposition of the ambient gradient $\tilde{\nabla}^{\gone}\log\alpha$ in $I\,{ }_{\alpha}\!\!\times M$  gives
		\begin{equation}
			\label{gradtildelogalpha}
			\tilde{\nabla}\log\alpha=\tilde{\nabla}^{\gone}\log\alpha+\tilde{N}(\log\alpha)\tilde{N}.
		\end{equation}  
		Considering (\ref{gradtildetau}) and (\ref{gradtildelogalpha}), we obtain
		\begin{equation}
			\label{productgradients}
			\gb(\nabla \log\alpha, \nabla \tau)=\alpha^2\gone(\tilde{\nabla}\log\alpha,\tilde{\nabla}\tau)=-\partial_t\log\alpha+\alpha\cosh\theta \tilde{N}(\log\alpha).
		\end{equation}
		Finally,  (\ref{taulaplacianMtilde}) follows after plugging (\ref{laptautilde}) and (\ref{productgradients}) into (\ref{conflaplacianrelation}).

	\end{proof}
	
	A natural question that arises is whether slices are the only maximal hypersurfaces in $I\,{ }_{\alpha}\!\!\times F$ when appropriate expanding conditions are considered.
	With the help of conformal maps, we prove they are indeed. To this purpose, we will first need the following lemma:  
	\begin{Lem}
		\label{Propconftransf2}
		Let $I\,{ }_{\alpha}\!\!\times F$ be a twisted product spacetime, and $\psi: \Hyp\rightarrow I\,{ }_{\alpha}\!\!\times F$ an immersed maximal compact spacelike hypersurface, where $\mathrm{dim}\,F> 2$. 
		Assume that $\der\alpha$ is a constant signed function. Then there exists a conformal transformation under which the  Laplacian of the time-height function $\tau$ of $\Sigma$ is constant signed.   
		\end {Lem}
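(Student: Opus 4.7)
The plan is to exhibit an explicit conformal change of the ambient metric $\gone$ that cancels the problematic summand in the expression for $\tilde{\triangle}\tau$ given in Lemma \ref{ProptaulaplacianMtilde}. Since $\Sigma$ is maximal one has $\tilde{H}=0$, so formula (\ref{taulaplacianMtilde}) reduces to
\[
\tilde{\triangle}\tau=\alpha^{-2}\left(\left(1+\cosh^2\theta\right)\der\log\alpha-2\alpha\cosh\theta\,\tilde{N}(\log\alpha)\right).
\]
The first summand has a definite sign by the assumption that $\der\alpha$ is constant signed, but the term containing $\tilde{N}(\log\alpha)$ does not. The idea is to perform a conformal transformation of the form $\hat{g}=e^{2\phi}\gone$ with $\phi$ a suitable function of $\alpha$ and then use the conformal Laplacian law from the Appendix, namely $\hat{\triangle}\tau=e^{-2\phi}(\tilde{\triangle}\tau+(n-2)\gone(\tilde{\nabla}\phi,\tilde{\nabla}\tau))$, to absorb the offending term.

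The natural guess is $\phi=\frac{2}{n-2}\log\alpha$, which is well defined precisely because $n=\mathrm{dim}\,F>2$. With this choice $(n-2)\tilde{\nabla}\phi=2\tilde{\nabla}\log\alpha$, so
\[
\hat{\triangle}\tau=e^{-2\phi}\left(\tilde{\triangle}\tau+2\gone(\tilde{\nabla}\log\alpha,\tilde{\nabla}\tau)\right).
\]
The inner product $\gone(\tilde{\nabla}\log\alpha,\tilde{\nabla}\tau)$ has already been computed in the proof of Lemma \ref{ProptaulaplacianMtilde} (see (\ref{productgradients})) and equals $\alpha^{-2}(-\der\log\alpha+\alpha\cosh\theta\,\tilde{N}(\log\alpha))$. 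A direct substitution produces the exact cancellation of the $\tilde{N}(\log\alpha)$ contribution and yields
\[
\hat{\triangle}\tau=\alpha^{-2-4/(n-2)}\sinh^2\theta\,\der\log\alpha.
\]
Since $\alpha>0$ and $\sinh^2\theta\geq 0$, the sign of this expression agrees with the sign of $\der\log\alpha=\alpha^{-1}\der\alpha$, which is constant on $\Sigma$ by hypothesis, concluding the proof.

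The only delicate step is selecting the right conformal factor: the coefficient $\frac{2}{n-2}$ is uniquely forced by requiring the two contributions involving $\tilde{N}(\log\alpha)$ to cancel, and this is precisely where the dimensional hypothesis $\mathrm{dim}\,F>2$ enters. Apart from this guess, the argument is a mechanical substitution using the formulas already established in Lemma \ref{ProptaulaplacianMtilde} together with the conformal identities from the Appendix; the compactness of $\Sigma$ is not actually needed for this particular statement, only for the intended application of the resulting sign in the subsequent characterization results.
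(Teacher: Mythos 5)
Your argument is correct and coincides with the paper's proof: the paper likewise applies the conformal change $\gtwo=\alpha^{2p}\gone$ with the Appendix's Laplacian law and the inner product (\ref{productgradients}), determining $p=\tfrac{2}{n-2}$ as the unique value cancelling the $\tilde{N}(\log\alpha)$ term, and arrives at the same final identity $\hat{\triangle}\tau=\alpha^{-2n/(n-2)}\sinh^2\theta\,\der\log\alpha$. The only cosmetic difference is that the paper leaves the exponent $p$ free and solves for the cancellation, whereas you posit it directly and verify; your observation that compactness is not used at this stage is also accurate.
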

		\begin{proof}
			For any $p\in \mathbb{R}$, the Laplacians $\tilde{\triangle}$ and $\hat{\triangle}$ of the conformally related metrics $\gone=-\alpha^2 dt^2+g_F$ of $I\,{ }_{\alpha}\!\!\times F$ and   
			$\gtwo=\alpha^{2p}\gone$ satisfy
			\begin{equation*}
				\hat{\triangle}\tau=\alpha^{-2p}(\tilde{\triangle}\tau+(n-2)\gone(\conone\log(\alpha^p),\conone\tau))=\alpha^{-2p}(\tilde{\triangle}\tau-\alpha^{-2}p(n-2)
				\gone(\conone\log \alpha,\conone\tau))
			\end{equation*}
			by virtue of formula (\ref{conformallaplacianrelation}) in the Appendix. Particularizing expression (\ref{taulaplacianMtilde}) for a maximal hypersurface in $(I\,{ }_{\alpha}\!\!\times F,\gone)$ and substituting in the above relation gives
			\begin{equation*}
				\hat{\triangle}\tau=\alpha^{-2p-2}\left( (1-p(n-2)+\cosh^2\theta)\der(\log\alpha)+(-2+p(n-2))\alpha(\cosh\theta)\norone(\log\alpha)  \right).	
			\end{equation*} 
			Note that the second term in the right-hand side of this expression vanishes for $p=\frac{2}{n-2}$ (we assume $n>2$). For this value of $p$ the relation between the Laplacians turns out to be
			\begin{equation}
				\label{laplacianconftrasnf3}
				\hat{\triangle}\tau=\alpha^{\frac{-2n}{n-2}}(\sinh^2\theta)\der\log\alpha=\left(\alpha^{\frac{-n}{n-2}}\right)^{2}(\sinh^2\theta)\frac{\der\alpha}{\alpha}, 
			\end{equation} 
			which clearly establishes the statement of the lemma provided $n>2$.
		\end{proof}

		\begin{Obs}
			Note that the above conformal transformation is not well defined for $n=2$.
			However, this problem can be addressed in the same way as in 
			\cite{PelegrinRomeroRubio2019} (Remark 1). Specifically,
			given $I\,{ }_{\alpha}\!\!\times F$
			we can consider
			the higher-dimensional ambient space $I\,{ }_{\alpha}\!\!\times \mathbb{S}^2\times F$, and $\mathbb{S}^2\times \Sigma$ as immersed spacelike hypersurface, whose mean curvature vector is the same than the one of $\Sigma$ lifted to the extended spacetime. The conformal transformation can be performed in this new background.
		\end{Obs}
		
		The characterization of spacelike maximal hypersurfaces in $I\,{ }_{\alpha}\!\!\times F$ follows as a consequence of the above lemma. 
		\begin{Tma}
			\label{maximalslice}
			Let $(\Mb=I\,{ }_{\alpha}\!\!\times F,\gone)$ be a twisted product spacetime, where $(F,g_F)$ is compact.
			Suppose that either
			\begin{itemize}
				\item[(i)] $\partial_t\alpha$ is constant signed,
				\item[(ii)] or a slice $\Sigma_{t_0}\equiv\{t=t_0\}$
				divides $I\,{ }_{\alpha}\!\!\times F$ in two different regions,  
				with $\partial_t \alpha<0$ if $t\leq t_0$ (past region), and $\partial_t \alpha>0$ if $t\geq t_0$ (future region).   
			\end{itemize}  
			Then, an immersed compact hypersurface $\psi: \Hyp\rightarrow I\,{ }_{\alpha}\!\!\times F$ is maximal if and only if it is a slice.   
		\end{Tma}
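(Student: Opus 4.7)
The forward implication is immediate from the paragraph preceding Lemma \ref{ProptaulaplacianMtilde}: slices in $(I\,{ }_{\alpha}\!\!\times F,\gone)$ are totally geodesic, so in particular maximal. For the converse I would combine Lemma \ref{Propconftransf2} with a classical maximum principle / divergence-theorem analysis. Let $\psi:\Sigma\rightarrow I\,{ }_{\alpha}\!\!\times F$ be a compact maximal spacelike hypersurface. When $\dim F>2$ apply the conformal change of Lemma \ref{Propconftransf2}, obtaining the metric $\gtwo=\alpha^{4/(n-2)}\gone$ on $\Mb$ for which the Laplacian of $\tau$ on $(\Sigma,\gtwo\vert_\Sigma)$ reduces by (\ref{laplacianconftrasnf3}) to
\begin{equation*}
\hat{\triangle}\tau=\bigl(\alpha^{-n/(n-2)}\bigr)^{2}\sinh^{2}\theta\,\frac{\der\alpha}{\alpha}.
\end{equation*}
The case $\dim F=2$ is handled exactly as indicated in the remark after Lemma \ref{Propconftransf2}, by lifting to $I\,{ }_{\alpha}\!\!\times\mathbb{S}^{2}\times F$ with $\mathbb{S}^{2}\times\Sigma$ as spacelike hypersurface, whose mean curvature equals that of $\Sigma$.

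Assume case $(i)$ with $\der\alpha>0$ (the case $\der\alpha<0$ is analogous). Then $\hat{\triangle}\tau\geq 0$ on $\Sigma$, with equality precisely when $\sinh\theta=0$. Since $(\Sigma,\gtwo\vert_\Sigma)$ is compact without boundary, the divergence theorem gives $\int_{\Sigma}\hat{\triangle}\tau\,dV_{\gtwo}=0$, forcing $\hat{\triangle}\tau\equiv 0$ on $\Sigma$ by continuity, and hence $\sinh\theta\equiv 0$. But $\sinh\theta=0$ means $\nortwo$ is collinear with $\alpha^{-1}\der$, i.e. $(\der)^{\top}=0$ along $\Sigma$; from $\conone\tau=-\alpha^{-2}(\der)^{\top}$ one deduces $\conone\tau=0$ and, by connectedness of $\Sigma$, that $\tau$ is constant. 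Hence $\Sigma$ is a slice.

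For case $(ii)$ the integrand of $\hat{\triangle}\tau$ is no longer of constant sign, so I would invoke Hopf's strong maximum principle instead. Let $t^{*}=\max_{\Sigma}\tau$ and suppose $t^{*}>t_{0}$. Let $U$ be the connected component of the open set $\{\tau>t_{0}\}\subset\Sigma$ containing a point where $\tau=t^{*}$. On $U$ we have $\tau>t_{0}$, hence $\der\alpha>0$, so $\hat{\triangle}\tau\geq 0$ on $U$, i.e. $\tau$ is subharmonic on $U$ and attains its interior maximum there. The strong maximum principle yields $\tau\equiv t^{*}$ on $U$; by continuity this constant value extends to the topological boundary $\partial U\subset\Sigma$, which however is characterized by $\tau=t_{0}<t^{*}$, a contradiction unless $\partial U=\emptyset$. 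Since $U$ is open and closed in the connected manifold $\Sigma$, this forces $U=\Sigma$ and consequently $\tau\equiv t^{*}$, so $\Sigma$ is a slice. The symmetric argument applied at a point where $\tau$ attains its minimum handles $\min_{\Sigma}\tau<t_{0}$; the remaining possibility $\min_{\Sigma}\tau=\max_{\Sigma}\tau=t_{0}$ identifies $\Sigma$ with $\Sigma_{t_{0}}$.

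The main technical delicacy will be the clean deployment of the strong maximum principle inside a proper open subset of $\Sigma$ in case $(ii)$, together with the connectedness argument that propagates constancy of $\tau$ from the component $U$ to the whole of $\Sigma$; the integral identity alone does not suffice here because the sign of $\hat{\triangle}\tau$ changes across the transition slice. Everything else (the conformal reduction, the expression for $\conone\tau$, and the deduction that $\sinh\theta\equiv 0$ implies $\Sigma$ is a slice) is routine once Lemma \ref{Propconftransf2} is in hand.
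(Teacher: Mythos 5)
Your proof is correct. For part (i) it is essentially the paper's argument: conformally rescale via Lemma \ref{Propconftransf2} so that $\hat{\triangle}\tau$ acquires a sign on the compact $\Sigma$, then conclude that $\tau$ is constant. (One small refinement: your step ``$\hat{\triangle}\tau\equiv 0$ hence $\sinh\theta\equiv 0$'' uses that $\partial_t\alpha$ never vanishes; it is marginally cleaner to note that $\int_\Sigma\hat{\triangle}\tau\,dV=0$ makes $\tau$ harmonic, hence constant, on the compact connected $\Sigma$, which requires no strictness.) For part (ii) you genuinely diverge from the paper. The paper sidesteps the strong maximum principle by introducing the auxiliary function $(\tau-t_0)^2$, whose Laplacian
\begin{equation*}
\hat{\triangle}(\tau-t_0)^2=2\left(\vert\hat{\nabla}\tau\vert^2+(\tau-t_0)\hat{\triangle}\tau\right)
\end{equation*}
is non-negative on all of $\Sigma$, because the signs of $\tau-t_0$ and of $\hat{\triangle}\tau$ (which by (\ref{laplacianconftrasnf3}) follows that of $\partial_t\alpha$) agree on each side of the transition slice; the same compactness argument as in (i) then applies globally and yields $\hat{\nabla}\tau\equiv 0$ at once. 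Your route --- Hopf's strong maximum principle on the component $U$ of $\{\tau>t_0\}$ containing a maximum point, followed by the observation that $\partial U\subset\{\tau=t_0\}$ forces $U$ to be clopen and hence all of $\Sigma$ --- is also sound: the topological step works because components of open subsets of a manifold are open, so any point of $\{\tau>t_0\}$ lying in $\overline{U}$ already belongs to $U$. What the paper's device buys is uniformity (one integral identity disposes of both cases) and independence from the strong maximum principle; what yours buys is locality, making transparent exactly where each sign hypothesis enters.
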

		\begin{proof}
			Let $\psi:\Sigma\rightarrow I\times_f F$ be a maximal hypersurface in $I\,{ }_{\alpha}\!\!\times F$. 
			Lemma \ref{Propconftransf2} can be applied under the assumptions of $(i)$, 
			so $\Hyp$ becomes a compact hypersurface in $(\Mb,\gtwo=\alpha^{\frac{4}{n-2}}\gone)$
			where $\hat{\triangle}\tau$ is constant signed, and hence  
			$\psi: \Hyp\rightarrow I\,{ }_{\alpha}\!\!\times F$ is  
			a slice in $I\,{ }_{\alpha}\!\!\times F$.      
			To prove $(ii)$, 
			let us consider the Laplacian of
			the  function $(\tau-t_0)^2$ in  $(\Mb,\gtwo=\alpha^{\frac{4}{n-2}}\gone)$
			\begin{equation}
				\hat{\triangle} (\tau-t_0)^2=2\left(\vert\hat{\nabla}\tau\vert^2+(\tau-t_0)\hat{\triangle}\tau\right).    
			\end{equation}
			Given any $p\in\Sigma$ in the future region where $\partial_t \alpha >0$ and $\tau-t_0\geq 0$,  
			$\hat{\triangle}\tau\geq 0$ by
			virtue of (\ref{laplacianconftrasnf3}), which makes $\hat{\triangle} (\tau-t_0)^2$
			non-negative. 
			An analogous argument for any $p\in\Sigma$ in the past region where $\partial_t \alpha <0$ and $\tau-t_0\leq 0$ 
			entails that $\hat{\triangle} (\tau-t_0)^2\geq 0$ too. In any case,
			$\hat{\triangle} (\tau-t_0)^2\geq 0$, so $\tau$ must be constant as a consequence of the compactness of $\Sigma$.
		\end{proof}

		The results obtained so far allow us to find a characterization of the totally umbilical hypersurfaces in $I\times_f F$ with a suitable restriction on their mean curvature.
		\begin{Tma}
			\label{umbilicaltransition}
			Let $I\times_f F$ be a twisted product spacetime with compact fiber, that is either 
			\begin{itemize}
				\item[(i)] expanding (or contracting),
				\item[(ii)] or  it admits a transition slice $S_{t_0}\equiv\{t= t_0\}$.  
			\end{itemize} 
			Then the only totally umbilical compact hypersurfaces  whose mean curvature satisfies the equation $H=\gb(N,\conb\log f)$ in $I\times_f F$ are the slices. 
		\end{Tma}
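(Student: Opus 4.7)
The plan is to reduce the statement to Theorem \ref{maximalslice} via the conformal change $\gone = f^{-2}\gb$, which turns $(I\times_f F, \gb)$ into $(I\,{ }_{\alpha}\!\!\times F, \gone)$ with $\alpha := f^{-1}$. The key observation is that the prescribed mean curvature equation $H = \gb(N, \conb \log f)$ is precisely the condition selecting those spacelike hypersurfaces of $I\times_f F$ which are maximal in the conformally related spacetime, so the umbilicity hypothesis combined with this mean curvature condition becomes a maximality statement in the transformed geometry.

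I would first check that the ambient hypotheses transfer properly. Since $\partial_t \alpha = -f^{-2}\partial_t f$, the sign of $\partial_t \alpha$ is opposite to that of $\partial_t f$, so expanding (respectively contracting) $I\times_f F$ corresponds to $\partial_t \alpha < 0$ (respectively $>0$) on the whole spacetime, giving hypothesis (i) of Theorem \ref{maximalslice}. Likewise, a transition slice $S_{t_0}$ in $I\times_f F$, where $\partial_t f \geq 0$ for $t \leq t_0$ and $\partial_t f \leq 0$ for $t \geq t_0$, becomes one with $\partial_t \alpha \leq 0$ in the past region and $\partial_t \alpha \geq 0$ in the future region, matching hypothesis (ii) of Theorem \ref{maximalslice}. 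Compactness of $\Sigma$ is obviously unaffected by the conformal change.

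Next I would translate the mean curvature condition. Writing the conformal factor as $\phi = \log \alpha = -\log f$, the two unit normals are related by $\tilde N = e^{-\phi} N = f N$, equivalently $N = \alpha\, \tilde N$. The mean curvature transformation formula from the Appendix (consistent with its use inside the proof of Lemma \ref{ProptaulaplacianMtilde}) yields
\begin{equation*}
H \;=\; \alpha\bigl(\tilde H - \tilde N(\log \alpha)\bigr),
\end{equation*}
whereas the hypothesis on $H$ rewrites as
\begin{equation*}
H \;=\; \gb(N, \conb \log f) \;=\; N(\log f) \;=\; -N(\log \alpha) \;=\; -\alpha\, \tilde N(\log \alpha).
\end{equation*}
Equating the two expressions for $H$ and cancelling the common factor of $\alpha$ leaves $\tilde H = 0$, so $\Sigma$ is a compact spacelike \emph{maximal} hypersurface of $(I\,{ }_{\alpha}\!\!\times F, \gone)$.

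The conclusion then follows immediately from Theorem \ref{maximalslice}: under either of the hypotheses on $\alpha$ established above, a compact maximal hypersurface of $I\,{ }_{\alpha}\!\!\times F$ must be a slice, and the slices of $I\times_f F$ and $I\,{ }_{\alpha}\!\!\times F$ coincide as submanifolds of the underlying product. Total umbilicity is compatible throughout the argument, since it is preserved under conformal transformations by Proposition \ref{Propconformaltransf} of the Appendix and every slice is automatically totally umbilical in $I\times_f F$. The main obstacle I anticipate is purely bookkeeping: one must track the conformal rescaling of the unit normal together with the sign flip arising from $\log f = -\log \alpha$; doing it carelessly turns the clean identification with maximality in $(\Mb, \gone)$ into a sign-ambiguous mess.
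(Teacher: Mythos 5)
Your proposal is correct and follows essentially the same route as the paper: the conformal change $\gone=f^{-2}\gb$, the observation (via the Appendix formula, i.e.\ Corollary \ref{charactmaximal}) that $H=\gb(N,\conb\log f)$ is exactly the condition $\tilde H=0$ in $I\,{ }_{\alpha}\!\!\times F$ with $\alpha=f^{-1}$, and then Theorem \ref{maximalslice}. The only cosmetic difference is that for case (ii) the paper re-runs the $\hat{\triangle}(\tau-t_0)^2$ argument directly rather than citing Theorem \ref{maximalslice}(ii) after checking the sign flip of $\partial_t\alpha$, as you do; your sign bookkeeping (including the factor of $\alpha$ in the mean curvature relation) is accurate.
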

		\begin{proof}
			Let $\psi:\Hyp\rightarrow I\times_f F$ be a totally umbilical compact hypersurface whose 
			mean curvature satisfies $H=\gb(N,\conb\log f)$. Case $(i)$ can be addressed applying 
			Corollary \ref{charactmaximal} in the Appendix, which gives the maximality of 
			$\Hyp$ in $I\,{ }_{f^{-1}}\!\!\times F$. Since the first time derivative of $\alpha:={f^{-1}}$ is constant signed, $\Hyp$ has to be necessarily a slice in $I\,{ }_{f^{-1}}\!\!\times F$ by Theorem \ref{maximalslice}, and consequently also in  $I\times_f F$. The proof of $(ii)$ follows the same argument as Theorem \ref{maximalslice}, $(ii)$. Specifically,  
			$\hat{\triangle} (\tau-t_0)^2$ is proven to be non-negative 
			regardless of the region where any point of $\Hyp$ lies, 
			and by the compactness assumption on the hypersurface, $\tau$ is constant. 
		\end{proof}

		\begin{Obs}
			The above theorem is optimal in the sense that it does not hold provided the signs of $\partial_t f$
			in the future and past region of the transition slice $S_{t_0}$ are opposite to the ones stated, as it occurs 
			in the De Sitter spacetime.  
		\end{Obs}

		We finally conclude the section with the characterization of compact maximal  
		spacelike hypersurfaces in $I\times_f F$. To obtain this result
		we first need to prove the following lemma, which establishes that any compact spacelike hypersurface
		must be a totally geodesic slice provided 
		its mean curvature and the restriction 
		of the time-derivative of $f$ to it have opposite signs. 
		\begin{Lem}
			\label{signLaplacian}
			Let $\psi:\Hyp\rightarrow I\times_f F$ be a compact spacelike hypersurface in the twisted product spacetime
			$I\times_f F$ and $H$ its mean curvature.
			If $(\der f)\cdot H\leq 0$ on $\Hyp$,
			then it is a totally geodesic spacelike slice. 
		\end{Lem}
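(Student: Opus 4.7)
The plan is to apply the Laplacian formula (\ref{laptau}) and show that the hypothesis $(\der f)\cdot H\leq 0$ forces both summands on the right-hand side to share a common sign on $\Sigma$. Compactness of $\Sigma$ together with the divergence theorem then collapses $\triangle\tau$ to zero, the common-sign property forces each summand to vanish pointwise, and from the vanishing of the individual terms the slice and totally geodesic conclusions follow at once.

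Rewrite (\ref{laptau}) in the form
$$\triangle\tau=-(n+|\cons\tau|^{2})\,\der\log f+nH\bigl(-\gb(\partial_{t},N)\bigr).$$
Since $n+|\cons\tau|^{2}\geq n>0$, $f>0$, and $-\gb(\partial_{t},N)\geq 1>0$ by (\ref{gradrel}), the first summand carries the sign opposite to $\der f$ and the second carries the sign of $H$. The pointwise hypothesis $(\der f)\cdot H\leq 0$ makes $\der f$ and $H$ of opposite signs, so the two summands are simultaneously $\leq 0$ or simultaneously $\geq 0$ on $\Sigma$, and hence $\triangle\tau$ is constant signed. Integrating over the compact hypersurface yields $\int_{\Sigma}\triangle\tau\,dV=0$, which combined with the constant-sign property forces $\triangle\tau\equiv 0$ on $\Sigma$. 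Because the two summands share a sign and sum to zero, each vanishes pointwise. From $nH\bigl(-\gb(\partial_{t},N)\bigr)\equiv 0$ together with $-\gb(\partial_{t},N)\geq 1$ we deduce $H\equiv 0$, and from $(n+|\cons\tau|^{2})\,\der\log f\equiv 0$ we get $\der f\equiv 0$ along $\Sigma$. A harmonic function on a compact Riemannian manifold without boundary is constant, so $\tau\equiv t_{0}$ for some $t_{0}\in I$ and $\Sigma$ coincides with the slice $\{t=t_{0}\}$.

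To finish, recall from Section \ref{spacelikesection} that on any slice of $I\times_{f}F$ the shape operator has the form $A=-\der\log f\cdot\mathrm{Id}$. Since $\der f$ vanishes identically on $\Sigma$, we obtain $A\equiv 0$, which is precisely the totally geodesic condition. The only delicate step is the sign bookkeeping in the second paragraph; once it is in place, the rest reduces to the maximum principle on compact Riemannian manifolds and the explicit form of the shape operator of slices in twisted product spacetimes.
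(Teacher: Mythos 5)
Your proof follows the paper's argument for this lemma essentially verbatim: the same sign analysis of the two summands in (\ref{laptau}) (using $f>0$ and $-\gb(\partial_t,N)\geq 1$), the same conclusion that $\triangle\tau$ is constant signed and hence, by compactness, identically zero with each summand vanishing, and the same final step identifying $\Sigma$ as a slice whose shape operator $A=-\der\log f\cdot\mathrm{Id}$ vanishes. The only difference is that you make explicit the divergence-theorem step and the deduction of $H\equiv 0$ and $\der f\equiv 0$, which the paper leaves implicit.
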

		\begin{proof}
			Let $\psi:\Hyp\rightarrow I\times_f F$ be a compact spacelike hypersurface in $I\times_f F$.
			The expression for $\triangle\tau$ in (\ref{laptau}) becomes constant signed provided $\der f$ and $H$ have opposite signs, 
			so $\tau$ is constant as a consequence of the compactness of $\Sigma$. 
			Since the terms on the right hand-side of ($\ref{laptau}$) have the same sign both must be zero, which in particular implies the maximality of $\Sigma$. Any slice in $I\times_f M$ is totally umbilical, so $\Sigma$ turns out to be totally geodesic too.
		\end{proof}

		\begin{Tma}
			\label{CharactMaximal1}
			Let $I\times_f F$ be a twisted product spacetime with compact fiber, that is either	
			\begin{itemize}
				\item[(i)]  expanding (resp. contracting),
				\item[(ii)] or it admits a transition spacelike slice $S_{t_0}\equiv\{t= t_0\}$.
			\end{itemize} 
			Then every compact maximal hypersurface  $\psi:\Hyp\rightarrow I\times_f F$ is also a totally geodesic spacelike slice. If  $(ii)$ happens, $S_{t_0}$ is the only compact maximal hypersurface in $I\times_f F$. 
		\end{Tma}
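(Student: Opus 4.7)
The plan is to handle the two cases separately, both leveraging the Laplacian formula (\ref{laptau}) together with the compactness of $\Sigma$. In each case maximality gives $H\equiv 0$, so (\ref{laptau}) reduces to
\begin{equation*}
\triangle\tau \;=\; -\bigl(n+|\nabla\tau|^{2}\bigr)\,\partial_t \log f .
\end{equation*}

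For case $(i)$, the hypothesis $(\partial_t f)\cdot H\leq 0$ of Lemma \ref{signLaplacian} is satisfied trivially since $H\equiv 0$, regardless of whether $\partial_t f>0$ or $\partial_t f<0$. Applying Lemma \ref{signLaplacian} directly yields that $\Sigma$ is a totally geodesic spacelike slice, so case $(i)$ is essentially a corollary of that lemma.

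For case $(ii)$, I would mimic the strategy used in Theorem \ref{maximalslice}$(ii)$ in the conformal setting and work with the auxiliary function $(\tau-t_0)^{2}$. In the future region $\tau\geq t_0$ one has $\partial_t f\leq 0$, which forces $\triangle\tau\geq 0$; in the past region $\tau\leq t_0$ one has $\partial_t f\geq 0$, forcing $\triangle\tau\leq 0$. The key observation is that $(\tau-t_0)$ and $\triangle\tau$ then share sign at every point of $\Sigma$, so from
\begin{equation*}
\triangle(\tau-t_0)^{2} \;=\; 2|\nabla\tau|^{2} + 2(\tau-t_0)\,\triangle\tau,
\end{equation*}
we deduce $\triangle(\tau-t_0)^{2}\geq 0$ on $\Sigma$. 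The compactness of $\Sigma$ and the maximum principle imply $(\tau-t_0)^{2}$ is constant, so $\nabla\tau\equiv 0$ and $\tau$ is constant. Substituting back into the simplified Laplacian formula gives $\partial_t\log f(\tau,\cdot)=0$ identically on the fiber; since $\partial_t f$ is strictly signed away from $t_0$ by the definition of a transition slice, this forces $\tau=t_0$ and hence $\Sigma=S_{t_0}$. That $S_{t_0}$ is totally geodesic follows from the fact that any slice is totally umbilical together with $H\equiv 0$.

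The main obstacle is the coordinated sign analysis in case $(ii)$: one must check that the geometry of $\Sigma$ cooperates with the chosen transition slice so that $(\tau-t_0)\triangle\tau\geq 0$ uniformly across $\Sigma$, even at points where $\Sigma$ crosses $S_{t_0}$. This is precisely where the hypothesis that the expanding phase lies to the past and the contracting phase to the future of $S_{t_0}$ is decisive; the Remark following Theorem \ref{umbilicaltransition} already indicates that reversing these signs, as happens in de Sitter, breaks the argument and generically allows extra maximal hypersurfaces to coexist with the transition slice.
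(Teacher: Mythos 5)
Your proposal is correct and follows essentially the same route as the paper: case $(i)$ is reduced to Lemma \ref{signLaplacian} via the trivially satisfied condition $(\partial_t f)\cdot H\leq 0$, and case $(ii)$ uses the subharmonicity of $(\tau-t_0)^2$ on the compact hypersurface to force $\tau$ constant, after which the only admissible level is $t_0$. Your way of pinning down $t_1=t_0$ (substituting into the Laplacian formula to get $\partial_t f=0$ on the slice) is equivalent to the paper's observation that a slice at level $t_1\neq t_0$ would have strictly signed mean curvature.
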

		\begin{proof}
			Let $\psi:\Hyp\rightarrow I\times_f F$ be a compact maximal hypersurface in $I\times_f F$. 
			Under the assumptions of $(i)$, $(\der f)\cdot H\leq 0$ holds on $\Hyp$, so
			$\Sigma$ is totally geodesic as a consequence of Lemma \ref{signLaplacian}. 
			To prove $(ii)$, it is enough to see again that
			$\triangle (\tau-t_0)^2$ is non-negative, which implies that $\tau=t_1$ for some $t_1\in \mathbb{R}$. If $t_1>t_0$, the mean curvature of $\Sigma$ would be strictly negative, whereas if $t_1<t_0$ it would be strictly positive, which contradicts the maximality hypothesis in both cases. The only possibility for the slice $\Sigma_{t_1}\equiv\{t=t_1\}$ to be maximal 
			is that $t_1=t_0$, i.e when it is the transition slice.
		\end{proof}

		\begin{remark}
			It would also be natural to consider twisted product spacetimes admitting a maximal spacelike slice $S_{t_0}\equiv\{t= t_0\}$ such that they are contracting for $t\leq t_0$ and expanding for $t\geq t_0$. Nevertheless, in this case it is not possible to obtain a similar uniqueness result as in  the previous theorem. Indeed, it is enough to consider  the well-known warped model of the De Sitter spacetime (see for instance \cite{AledoAlias2001}).
		\end{remark}

		\section{Calabi-Bernstein-type results}
		\label{Calabisection}

		A characterization of maximal compact
		hypersurfaces has been obtained in
		Theorem \ref{CharactMaximal1}  
		for expanding and contracting twisted product spacetimes $I\times_f F$, and also for those which admit a transition slice. 
		In this section we focus on  
		immersed spacelike hypersurfaces $\psi:\Sigma\rightarrow I\times_f F$ which are graphs over the fiber $(F,g_F)$.
		The calculation of the graph mean curvature $H$ becomes essential to reformulate the maximality results of  
		Theorem \ref{CharactMaximal1} in a proper Calabi-Bernstein style by providing existence and uniqueness results to the associated $H=0$ elliptic PDE under suitable conditions. To this purpose, let $\Omega\subset F$ be an open set in $I\times_f F$. Any $u\in C^{\infty}(\Omega)$ defines 
		a local graph $\Sigma_u=\{(u(p),p) : p\in \Omega\}$ on the fiber, where the map $x:\Omega\subset F \rightarrow I\times_f F$ given by $x(p)=(u(p),p)$ is an immersion. Let us refer to the induced metric on the graph as $g_u$, which is of the form (\ref{inducedmetricexpr}), where the time-height function $\tau$ of the graph is related to $u$ by $u=\tau\circ x$. The pull-back of $g_u$ by $x$ to the fiber $(F,g_F)$ reads
		\begin{equation}
			\label{inducedmetric}
			x^{*}(g_u)=-du^2+(f\circ x)^2 g_F .
		\end{equation}  
 	The hypersurface $(\Sigma_u,g_u)$ is said to be an entire graph when $\pi_{F}\vert_{\Sigma_u}=x^{-1}:\Sigma_u\rightarrow F$ is a global diffeomorphism.
	Lemma \ref{Coveringmap} shows that $\pi_{F}\vert_{\Sigma_u}$
	is a cover map provided $(\Sigma_u,g_u)$ is complete and the restriction of $f$ to $(\Sigma_u,g_u)$ is bounded. 
	In particular, this occurs when  
	$I\times_f F$ admits a compact spacelike hypersurface $\psi:\Sigma\rightarrow I\times_f F$,
	which also implies the compactness of $(F,g_F)$. 
	Additionally, if the fiber is connected, $\pi_{F}\vert_{\Sigma}$ is a global diffeomorphism and hence $(\Sigma_u,g_u)$ is a global graph over the fiber. In the following proposition the causal character of spacelike graphs is characterized in terms of the gradient of the graph function $u$. 
		\begin{Prop}
			Let $\Omega\subset F$ be an open set and $(\Sigma_u,g_u)$ a graph over $(\Omega,g_F)$ determined by the immersion 
			$x:\Omega\subset F\rightarrow I\times_f F$, where $x(p)=(u(p),p)\in I\times_f F$ for some 
			function $u\in C^\infty(\Omega)$. Then the validity of the inequality 
			\begin{equation}
				\label{graphspacelikefiber}
				\vert\nabla^F u\vert < f\circ x
			\end{equation}
			at every $p\in \Omega$ is necessary and sufficient for the graph metric $g_u$ to be Riemannian, where $\nabla^F$ is the Levi-Civita connection of $(F,g_F)$.
		\end{Prop}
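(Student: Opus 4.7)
My plan is to work entirely with the pulled-back metric (\ref{inducedmetric}), namely $x^{*}(g_u)=-du^2+(f\circ x)^2 g_F$ on $\Omega$. Since $x:\Omega\to\Sigma_u$ is a diffeomorphism onto the graph, $g_u$ is Riemannian on $\Sigma_u$ if and only if $x^{*}(g_u)$ is Riemannian on $\Omega$. So the statement to prove reduces to: $x^{*}(g_u)$ is positive definite at every $p\in\Omega$ if and only if $|\nabla^F u|_{g_F}(p)<(f\circ x)(p)$.

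For the sufficiency, I would fix $p\in\Omega$ and an arbitrary nonzero $V\in T_p\Omega$, and compute
\begin{equation*}
x^{*}(g_u)(V,V)=-du(V)^2+(f\circ x)^2\,g_F(V,V)=-g_F(\nabla^F u,V)^2+(f\circ x)^2|V|_{g_F}^2.
\end{equation*}
The Cauchy--Schwarz inequality in $(F,g_F)$ gives $g_F(\nabla^F u,V)^2\leq |\nabla^F u|_{g_F}^2\,|V|_{g_F}^2$, hence
\begin{equation*}
x^{*}(g_u)(V,V)\geq \bigl((f\circ x)^2-|\nabla^F u|_{g_F}^2\bigr)|V|_{g_F}^2,
\end{equation*}
which is strictly positive at $p$ under hypothesis (\ref{graphspacelikefiber}).

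For the necessity, I would argue by contraposition. If $(f\circ x)(p)\leq |\nabla^F u|_{g_F}(p)$, two subcases appear: either $\nabla^F u(p)=0$, which forces $f\circ x(p)=0$, contradicting $f>0$, or $\nabla^F u(p)\neq 0$. In the latter case, taking $V=\nabla^F u(p)\in T_p\Omega$ yields
\begin{equation*}
x^{*}(g_u)(V,V)=\bigl((f\circ x)^2-|\nabla^F u|_{g_F}^2\bigr)|\nabla^F u|_{g_F}^2\leq 0,
\end{equation*}
so $x^{*}(g_u)$ fails to be positive definite at $p$. There is no real obstacle here; the content is essentially Cauchy--Schwarz combined with the fact that the quadratic form $-du^2+(f\circ x)^2 g_F$ degenerates (and then turns Lorentzian) precisely along the direction $\nabla^F u$ as $|\nabla^F u|_{g_F}$ crosses the threshold $f\circ x$. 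I would also note that this is consistent with the characterization (iii) in Proposition \ref{spacelikecharac}, where the sharper (non-strict) inequality on $\Sigma$ corresponds here to the strict version that guarantees non-degeneracy of the graph metric.
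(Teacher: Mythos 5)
Your proof is correct, and its core computation --- evaluating the quadratic form $-du^2+(f\circ x)^2 g_F$ on the gradient direction to get $\bigl((f\circ x)^2-|\nabla^F u|_{g_F}^2\bigr)|\nabla^F u|_{g_F}^2$ --- is exactly the one the paper uses, there phrased as the squared $g_u$-norm of $dx(\nabla^F u)$. If anything, your version is the more complete one: the paper's proof only spells out the necessity direction (Riemannian implies the inequality), while your Cauchy--Schwarz estimate supplies the sufficiency half of the ``necessary and sufficient'' claim explicitly.
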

		\begin{proof}
			For every $p\in \Omega$, let us consider the push-forward of the gradient $\nabla^F u\in \mathfrak{X}(F)$ 
			by $dx$ to the tangent bundle of $(\Sigma_u,g_u)$ 
			 \begin{equation}
			 	\label{pushedforwardrelation}
			 	dx(\nabla^F u)=(\vert \nabla^F u \vert^2_{g_F}\circ x^{-1})\partial_t+\nabla^F u.
			 \end{equation}
			Squaring the above relation gives
			 \begin{equation}
			 	\label{squaringdiffgradu}
			 	\vert dx(\nabla^F u) \vert^2=(x^{-1})^{*}\left( \vert \nabla^F u \vert^2_{g_F}((f\circ x)^2-\vert \nabla^F u \vert^2_{g_F}) \right),
			 \end{equation}
          where $(x^{-1})^{*}$ stands for the pull-back by $x^{-1}:(\Sigma_u,g_u)\rightarrow (F,g_F)$.   		 
		 Since $g_u$ is a Riemannian metric, the right-hand side of (\ref{squaringdiffgradu}) becomes strictly positive when $\nabla^F u\neq 0$, which can only happen if (\ref{graphspacelikefiber}) holds. On the other hand, 
		  (\ref{graphspacelikefiber}) is also valid when $\nabla^F u=0$, so in any case (\ref{graphspacelikefiber}) holds true.     	 
		\end{proof}

		The mean curvature of the graph $(\Sigma_u,g_u)$ satisfies a PDE which involves the function $u: \Omega\subset F\rightarrow \mathbb{R}$ and its first and second order derivatives. This expression is well known when the ambient is a warped product (see  \cite{AledoRubioSalamanca2017} for instance). In this section
		the corresponding maximal equation for a graph in $I\times_f F$ is derived from (\ref{laptau}). 
		To this purpose, we first obtain 
		in the following lemma  the expression 
		for the Laplacian of $\tau$  in terms of the fiber geometry:
		\begin{Lem}
			\label{laplacianRelationLemma}
			Let $\Omega\subset F$ be an open set and $(\Sigma_u,g_u)$ a graph over $(\Omega,g_F)$ determined by the immersion 
			$x:\Omega\subset F\rightarrow I\times_f F$, where $x(p)=(u(p),p)\in I\times_f F$ for some 
			function $u\in C^\infty(\Omega)$. Assume that 
			$\vert\nabla^{F}u\vert < f\circ x$. The expression for $\triangle \tau$ in terms of the fiber geometry is
			\begin{equation}
				\label{laplacianRelation}
				x^*(\triangle \tau)=\fun(f\circ x)^{-n+2} g_F(\nabla^{F}(\fun(f\circ x)^n),\nabla^{F}u)+\fun^2(f\circ x)^{2} \triangle_{F} u \, ,
			\end{equation} 
			where  $\rho:\Omega\subset F\rightarrow \mathbb{R}$ is defined as $\fun=\frac{1}{(f\circ x)\sqrt{(f\circ x)^2-\vert\nabla^{F}u\vert^2}}$ and $\triangle_{F}$ is the Laplacian on $(F,g_F)$.
		\end{Lem}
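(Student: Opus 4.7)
The plan is to bypass formula \eqref{laptau} entirely and compute $x^{*}(\triangle\tau)$ directly on $(\Omega,h)$ where $h:=x^{*}(g_u)=-du\otimes du+(f\circ x)^2 g_F$. Using \eqref{laptau} would be circular here, since it would require an independent computation of the graph's mean curvature $H$, which is precisely the datum one recovers once \eqref{laplacianRelation} is known. Since $x$ identifies $(\Omega,h)$ isometrically with $(\Sigma_u,g_u)$ and $x^{*}\tau=u$, I would start from the identity $x^{*}(\triangle\tau)=\triangle_{h} u$, reducing the problem to evaluating the Laplacian of $u$ for the explicit metric $h$ in local coordinates on $F$.

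The first computational step is to invert $h$. Writing $\tilde f:=f\circ x$ and $u_i=\partial_i u$, the metric is the rank-one perturbation $h_{ij}=\tilde f^{2}\,g_{F,ij}-u_i u_j$ of $\tilde f^{2}g_F$, so the Sherman–Morrison formula gives
\begin{equation*}
h^{ij}=\tilde f^{-2} g_F^{ij}+\frac{u^{i}u^{j}}{\tilde f^{2}(\tilde f^{2}-|\nabla^{F}u|^{2})},
\end{equation*}
and consequently the ``gradient direction'' collapses nicely to
\begin{equation*}
h^{ij}u_{j}=\frac{u^{i}}{\tilde f^{2}-|\nabla^{F}u|^{2}}.
\end{equation*}
The determinant is $\det h_{ij}=\tilde f^{2n-2}(\tilde f^{2}-|\nabla^{F}u|^{2})\det g_F$, which using the definition $\rho=[\tilde f\sqrt{\tilde f^{2}-|\nabla^{F}u|^{2}}]^{-1}$ rewrites cleanly as $\sqrt{|h|}=\tilde f^{\,n-2}\rho^{-1}\sqrt{|g_F|}$.

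With these two ingredients in hand, I would plug into the standard coordinate formula $\triangle_{h}u=|h|^{-1/2}\partial_{i}(\sqrt{|h|}\,h^{ij}u_{j})$. The algebraic miracle to exploit is that the factor $[\rho(\tilde f^{2}-|\nabla^{F}u|^{2})]^{-1}$ simplifies to $\rho\tilde f^{2}$, so that the combined coefficient inside the divergence becomes $\rho\tilde f^{\,n}\sqrt{|g_F|}$. This converts the expression into
\begin{equation*}
\triangle_{h}u=\rho\,\tilde f^{\,-n+2}\,\mathrm{div}_{F}\!\left(\rho\,\tilde f^{\,n}\,\nabla^{F}u\right),
\end{equation*}
and distributing the divergence via the Leibniz rule yields exactly \eqref{laplacianRelation}.

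The main obstacle is purely bookkeeping: keeping track of the pullback notation and noticing the simplification $\rho\tilde f^{2}=[\rho(\tilde f^{2}-|\nabla^{F}u|^{2})]^{-1}$ that allows the prefactor to consolidate into the single divergence term. No global or analytic hypotheses are needed beyond the spacelike condition $|\nabla^{F}u|<f\circ x$, which guarantees $\tilde f^{2}-|\nabla^{F}u|^{2}>0$ so that $\rho$ and $h^{-1}$ are well defined throughout $\Omega$.
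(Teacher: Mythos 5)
Your proof is correct and is essentially the paper's own argument viewed through the pullback $x$: the paper likewise computes $\triangle\tau$ via the coordinate divergence formula, using the same determinant identity $x^{*}(\det g_u)=\rho^{-2}(f\circ x)^{2n-4}\det g_F$ and the same contraction $h^{ij}u_j=(f\circ x)^2\rho^2 u^i$ (your Sherman--Morrison step just derives the full inverse metric, of which only this contraction is needed), before consolidating into $\rho(f\circ x)^{-n+2}\,\mathrm{div}_F(\rho(f\circ x)^n\nabla^F u)$ and expanding by Leibniz. Note also that your circularity worry is moot: the paper does not invoke (\ref{laptau}) in this lemma either, only in the subsequent proposition where the mean curvature is extracted.
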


		\begin{proof}
			Let $\{E_i\}_{i=1}^{n}$ be a local basis tangent to the
			graph $(\Sigma_u,g_u)$.
			The decomposition of $E_i$ into its horizontal and vertical parts
			is $E_i=(\partial_i u)\partial_t+E_i^F$.
			The Laplacian of $\tau$ can be computed using formula (\ref{lapconforme}) in the Appendix, which in this context reads
			\begin{equation}
				\label{lapgraph1}
				\triangle\tau=\frac{1}{\sqrt{\det g_u}}\sum_{i=1}^n E_i\left(\sqrt{\det g_u} \, (\nabla \tau)^i \right),
			\end{equation} 
			where $(\nabla \tau)^i$ stands for the i-component of $\nabla\tau$ in the basis $\{E_i\}_{i=1}^n$.
			A straightforward computation shows that the components of $\nabla \tau$ and $\nabla^{F}u$ in the respective bases $\{E_i\}_{i=1}^n$ and $\{E_i^F\}_{i=1}^n$
			are related by
			\begin{equation}
				\label{gradrelation}
				x^{*}((\nabla \tau)^i)= f^2\fun^2 (\nabla^{F}u)^i, 
			\end{equation}
		    where $\rho$ is defined as in the statement of this lemma. On the other hand, the metrics $g_u$ and $g_F$ are related by (\ref{inducedmetric}), which makes their determinants satisfy   
			\begin{equation}
				\label{detrelation}
				x^{*}(\det g_u)=\fun^{-2}(f\circ x)^{2n-4} \det g_F. 
			\end{equation}
			Plugging (\ref{gradrelation}) and (\ref{detrelation}) into (\ref{lapgraph1}) gives
			\begin{equation*}
				x^*(\triangle\tau)=\frac{1}{(f\circ x)^{n-2} \fun^{-1}\sqrt{\det g_F}}\sum_{i=1}^n E_i^F\left(\fun(f\circ x)^{n} \sqrt{\det g_F}\, (\nabla^{F} u)^i \right),
			\end{equation*}
			where we have used that  $X(h)=X^F(h\circ x)$ for any function $h:\Sigma_u\rightarrow\mathbb{R}$ and any $X=X^F(u)\partial_t+X^F$ tangent to $(\Sigma_u,g_u)$. 
			Hence
			\begin{eqnarray*}
				x^*(\triangle\tau)&=& (f\circ x)^{-n+2}\fun \left(
				\left(\sum_{i=1}^n E_i^F\left(\fun(f\circ x)^{n}\right)(\nabla^{F}u)^i\right)+\frac{\fun(f\circ x)^n}{\sqrt{\det g_F}} \sum_{i=1}^n E_i^F\left(\sqrt{\det g_F} (\nabla^{F}u)^i\right) \right) \nonumber \\
				&=& \fun(f\circ x)^{-n+2} g_F(\nabla^{F}(\fun(f\circ x)^n),\nabla^{F}u)+\fun^2(f\circ x)^{2} \triangle_{F} u,  \label{lapgraph2}  
			\end{eqnarray*}
			as stated above.
		\end{proof}

		The expression for the mean curvature $H$ of a graph over the fiber in $I\times_f F$ follows directly from  
		Lemma \ref{laplacianRelationLemma}:
		\begin{Prop}
			\label{Propmeancurvequation}
			Let $\Omega\subset F$ be an open set and $(\Sigma_u,g_u)$ a graph over $(\Omega,g_F)$ determined by the immersion 
			$x:\Omega\subset F\rightarrow I\times_f F$, where $x(p)=(u(p),p)\in I\times_f F$ for some 
			function $u\in C^\infty(\Omega)$. Assume that 
			$\vert\nabla^{F}u\vert < f\circ x$. Its mean curvature $H$ satisfies the following equation:
			\begin{eqnarray}
				\label{meancurvaturefibergeom}
				n\cdot x^*H=\div_{F}(\fun \nabla^{F}u)+(f\circ x)^2\fun\left(n+\frac{\vert\nabla^{F}u\vert^2_{g_F}}{(f\circ x)^2}\right)(\der\log f)\circ x+n\fun g_F(\nabla^{F}\log( f\circ x),\nabla^{F}u),   
			\end{eqnarray}	
			where $\rho:\Omega\subset F\rightarrow \mathbb{R}$ is defined as $\fun=\frac{1}{(f\circ x)\sqrt{(f\circ x)^2-\vert\nabla^{F}u\vert^2}}$ and $\div_{F}$ is the divergence operator with respect to $g_F$.
		\end{Prop}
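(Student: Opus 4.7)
The plan is to derive \eqref{meancurvaturefibergeom} by combining the intrinsic Laplacian identity \eqref{laptau} of Proposition \ref{Proplaptau} with the extrinsic expression \eqref{laplacianRelation} of Lemma \ref{laplacianRelationLemma}. Solving \eqref{laptau} for the mean curvature gives
\[
nH\,\gb(\partial_t,N) = -\triangle\tau - (n+\vert\cons\tau\vert^2)\,\partial_t\log f,
\]
so the whole problem reduces to expressing $\gb(\partial_t,N)$ and $\vert\cons\tau\vert^2$ on $\Sigma_u$ in terms of $u$, $\nabla^F u$ and $\fun$.

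First I would identify the future unit normal $N$ of $(\Sigma_u,g_u)$. Writing $N=N^0\partial_t+N^F$ with $N^F$ lifted from $TF$, the orthogonality $\gb(N,dx(E))=0$ for every $E\in TF$ forces $N^F=(N^0/(f\circ x)^2)\nabla^F u$, and the unit-timelike normalization $\gb(N,N)=-1$ then yields $N^0=(f\circ x)^2\fun$. Consequently, along the graph, $\gb(\partial_t,N)=-(f\circ x)^2\fun$, and \eqref{gradrel} gives
\[
x^*\vert\cons\tau\vert^2 = \gb(\partial_t,N)^2-1 = (f\circ x)^2\fun^2\vert\nabla^F u\vert^2_{g_F}.
\]

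Second I would pull the displayed identity back by $x$ and substitute $x^*(\triangle\tau)$ from \eqref{laplacianRelation}. Applying the Leibniz rule
\[
\nabla^F\bigl(\fun(f\circ x)^n\bigr) = (f\circ x)^n\nabla^F\fun + n\fun(f\circ x)^n\nabla^F\log(f\circ x),
\]
the two pieces of \eqref{laplacianRelation}, after dividing through by $(f\circ x)^2\fun$, combine into $g_F(\nabla^F\fun,\nabla^F u)+\fun\,\triangle_F u$ — which collapses to $\div_F(\fun\nabla^F u)$ — plus precisely the term $n\fun\,g_F(\nabla^F\log(f\circ x),\nabla^F u)$ appearing as the last summand of \eqref{meancurvaturefibergeom}.

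The final step is a purely algebraic rearrangement: inserting the explicit form of $x^*\vert\cons\tau\vert^2$ into $n+x^*\vert\cons\tau\vert^2$ and dividing by $(f\circ x)^2\fun$ recasts the $(\partial_t\log f)\circ x$-coefficient as $(f\circ x)^2\fun\bigl(n+\vert\nabla^F u\vert^2_{g_F}/(f\circ x)^2\bigr)$, yielding exactly the middle term of \eqref{meancurvaturefibergeom}. The only obstacle I anticipate is bookkeeping in this last rearrangement — keeping track of the powers of $(f\circ x)$ and $\fun$ that arise from the normal-vector and Laplacian formulas — but no new geometric ingredient beyond the two Laplacian identities already at hand is needed.
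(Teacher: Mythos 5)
Your overall strategy --- computing the unit normal to get $\gb(\partial_t,N)=-(f\circ x)^2\fun$ and $x^*\vert\cons\tau\vert^2=(f\circ x)^2\fun^2\vert\nabla^F u\vert^2_{g_F}$, then solving \eqref{laptau} for $H$ and substituting \eqref{laplacianRelation} --- is exactly the paper's, and those preliminary identities are correct. The error is in your final step. The ``purely algebraic rearrangement'' you invoke is a false identity: dividing $n+x^*\vert\cons\tau\vert^2$ by $-\gb(\partial_t,N)=(f\circ x)^2\fun$ gives
\begin{equation*}
\frac{n+(f\circ x)^2\fun^2\vert\nabla^F u\vert^2_{g_F}}{(f\circ x)^2\fun}=\frac{n}{(f\circ x)^2\fun}+\fun\vert\nabla^F u\vert^2_{g_F},
\end{equation*}
and since $\frac{1}{(f\circ x)^2\fun}=(f\circ x)^2\fun-\fun\vert\nabla^F u\vert^2_{g_F}$, this equals $(f\circ x)^2\fun\left(n+\frac{\vert\nabla^F u\vert^2_{g_F}}{(f\circ x)^2}\right)-n\fun\vert\nabla^F u\vert^2_{g_F}$; it falls short of the middle coefficient of \eqref{meancurvaturefibergeom} by $n\fun\vert\nabla^F u\vert^2_{g_F}$.

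The missing piece sits inside your Leibniz term. The gradient of the composite function $\log(f\circ x)$ splits according to the decomposition $dx(\nabla^F u)=\vert\nabla^F u\vert^2_{g_F}\,\partial_t+\nabla^F u$ of \eqref{pushedforwardrelation} as
\begin{equation*}
g_F(\nabla^F\log(f\circ x),\nabla^F u)=\left((\der\log f)\circ x\right)\vert\nabla^F u\vert^2_{g_F}+g_F(\nabla^F u,\,\text{(fiber part of }\conb\log f\text{)}\circ x),
\end{equation*}
and only the second summand is the last term of \eqref{meancurvaturefibergeom} (this reading is forced by the subsequent Remark: when $f=f(t)$ that term must vanish so the formula reduces to the GRW one). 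The first summand, multiplied by $n\fun$, supplies precisely the $n\fun\vert\nabla^F u\vert^2_{g_F}\,(\der\log f)\circ x$ needed to complete the middle coefficient --- this is the transition the paper performs between \eqref{meanCurvEq1} and its final display. As written, you keep the full composite-gradient pairing as the last summand \emph{and} inflate the middle coefficient, so your assembled right-hand side exceeds $n\cdot x^*H$ by $n\fun\left((\der\log f)\circ x\right)\vert\nabla^F u\vert^2_{g_F}$; a direct check in dimension $n=1$ with $\gb=-dt^2+f(t,x)^2dx^2$ confirms the discrepancy. The repair is pure bookkeeping, but the step as you state it does not go through.
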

		\begin{proof}
			Relation ($\ref{meancurvaturefibergeom}$) for $H$ can be derived by rewritting
			the whole equality (\ref{laptau}) in terms of the geometry of $(F,g_F)$ and solving for $H$. 
			The expression for $\triangle\tau$ in terms of the fiber geometry has already 
			been obtained in Lemma \ref{laplacianRelationLemma}, which together with 
		    relation $x^*(\vert \nabla\tau\vert^2 )=(f\circ x)^2 \rho^2\vert \nabla^F u\vert^2_{g_F}$ derived from  (\ref{gradrelation}) can be plugged into (\ref{laptau}) to obtain
			\begin{equation}
				\label{meanCurvEq1}
				(f\circ x)^{-n} g_F(\nabla^{F}(\fun(f\circ x)^n),\nabla^{F}u)+\fun\triangle_{F} u=\frac{-1}{\fun(f\circ x)^2}\left(n+\fun^2(f\circ x)^2\vert\nabla^{F}u\vert^2_{g_F}\right)(\der \log f)\circ x+n\cdot x^*H.
			\end{equation}
		    Notice that 
			\begin{equation*}
				\nabla^{F}(\fun(f\circ x)^n)=(\nabla^{F}\fun)(f\circ x)^n+n\fun(f\circ x)^{n-1}(\conb f\circ dx),
			\end{equation*}
			so the first term of the left-hand side of (\ref{meanCurvEq1}) becomes
			\begin{equation*}
				(f\circ x)^{-n}g_F(\nabla^{F}(\fun(f\circ x)^n),\nabla^{F}u)=g_F(\nabla^{F}\fun,\nabla^{F}u)
				+n\fun(f\circ x)^{-1}(\conb f\circ dx)(\nabla^{F}u).  
			\end{equation*}
			Using (\ref{pushedforwardrelation}) in combination with formula $\textrm{div}_{F}(\fun \nabla^{F}u)=g_F(\nabla^{F}\fun,\nabla^{F}u)+\fun\triangle_{F} u$ finally gives (\ref{meancurvaturefibergeom})
			after solving for $H$ in (\ref{meanCurvEq1}),
			\begin{eqnarray*}
				\label{meanCurvEq2}
				n\cdot x^*H&=& \nonumber \\
				&=&\mathrm{div}_{F}(\fun \nabla^{F}u)+\left(\left(\frac{1}{\fun(f\circ x)^2}+\fun\vert\nabla^{F}u\vert^2_{g_F}\right)n+\fun\vert\nabla^{F}u\vert^2_{g_F}\right)(\partial_t\log f)\circ x  \nonumber \\
				&&+n\fun g_F(\nabla^{F}(\log f\circ x),\nabla^{F}u) \nonumber \\
				&=&\div_{F}(\fun \nabla^{F}u)+(f\circ x)^2\fun\left(n+\frac{\vert\nabla^{F}u\vert^2_{g_F}}{(f\circ x)^2}\right)(\der\log f)\circ x+n\fun g_F(\nabla^{F}\log( f\circ x),\nabla^{F}u).
			\end{eqnarray*}
		\end{proof}
		
		\begin{remark}
			It is immediate to check that relation (\ref{meancurvaturefibergeom}) for the mean curvature of graphs over the fiber in a twisted product context reduces to the one given in \cite{AledoRubioSalamanca2017} for GRW spacetimes when the twisted function is assumed to depend just on the time variable.   
		\end{remark}

		The results obtained in Theorem \ref{CharactMaximal1} for hypersurfaces in $I\times_f F$ whose mean curvature satisfies  
		$H=0$ with $H$ as in (\ref{meancurvaturefibergeom})
		are Calabi-Bernstein-type, since the only maximal ones 
	    under the stated expanding conditions    
		are the slices. The precise formulation for these results in a proper Calabi-Bernstein style reads as follows:
		\begin{Tma} Let $(M,g)$ be a connected compact Riemannian $n$-manifold where $D$ is the Levi-Civita connection of $g$. Let $I\subset\mathbb{R}$ be an open interval and $f:I\times M\longrightarrow \mathbb{R}$ a positive smooth function satisfying some of the following possibilities:
			
			i) $\partial_t f\leq 0$, for all $t\in I$, or
			
			ii)  $\partial_t f\geq 0$, for all $t\in I$, or
			
			iii) there exists $t_1\in I$ such that,  $\partial_t f\geq 0$, for all $t\leq t_1$ and $\partial_t f\leq 0$, for all 
			$t\geq t_1$.

			Then, the only solution to the elliptic non-linear problem 
			\begin{eqnarray}
				\label{maximalgraphequation}
				\div (\fun Du)+(f\circ x)^2\fun\left(n+\frac{|Du|^2}{(f\circ x)^2}\right)(\der\log f)\circ x+n\fun g_F(D\log( f\circ x),Du) =0,  
				\nonumber 
			\end{eqnarray}
			where $x:M\rightarrow I\times M$, $x(p)=(u(p),p)$, $\rho=\frac{1}{(f\circ x)\sqrt{(f\circ x)^2-|Du|^2}}$ and $|Du|<f\circ x$
			are the constant functions $u=t_0$, with $\partial_t f(t_0,p)=0$, for all $p\in M$.

		\end{Tma}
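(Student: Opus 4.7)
The plan is to translate the PDE into a geometric statement and then invoke Theorem \ref{CharactMaximal1}. Given a $C^2$ function $u:M\to I$ with $|Du|<f\circ x$, the map $x(p)=(u(p),p)$ defines an immersed spacelike graph $\Sigma_u\subset I\times_f M$, which, by the global diffeomorphism $x:M\to \Sigma_u$, is a compact spacelike hypersurface (compactness is inherited from $M$). By Proposition \ref{Propmeancurvequation}, the displayed equation is precisely the condition $H\equiv 0$, so $\Sigma_u$ is a compact maximal hypersurface in $I\times_f M$.

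Next I would verify that each of the three cases $(i)$--$(iii)$ places us in the hypotheses of Theorem \ref{CharactMaximal1}, granting some care with the non-strict inequalities. In cases $(i)$ and $(ii)$, the product $(\partial_t f)\cdot H$ vanishes identically on $\Sigma_u$ (since $H=0$), and moreover $\partial_t f$ has constant sign on $\Sigma_u$. Thus, by formula (\ref{laptau}) we get
\begin{equation*}
\triangle\tau = -(n+|\nabla\tau|^2)\,\partial_t\log f,
\end{equation*}
which has constant sign; by the maximum principle on the compact manifold $\Sigma_u$, the function $\tau$ is constant. This is exactly the content of Lemma \ref{signLaplacian} for maximal hypersurfaces, so $\Sigma_u$ is a totally geodesic slice. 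In case $(iii)$, I would mimic the argument of Theorem \ref{CharactMaximal1}$(ii)$: compute
\begin{equation*}
\triangle(\tau-t_1)^2 = 2|\nabla\tau|^2 -2(\tau-t_1)(n+|\nabla\tau|^2)\,\partial_t\log f,
\end{equation*}
and split into the two regions $\{\tau\ge t_1\}$ and $\{\tau\le t_1\}$. The sign hypothesis on $\partial_t f$ in each region makes $(\tau-t_1)\,\partial_t\log f\le 0$ pointwise, hence $\triangle(\tau-t_1)^2\ge 0$; compactness of $\Sigma_u$ then forces $(\tau-t_1)^2$ to be constant, so $\tau$ is constant as well.

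In all three cases we conclude that $u\equiv t_0$ for some $t_0\in I$. To finish, I would use that a slice $\{t=t_0\}$ in a twisted product $I\times_f M$ has mean curvature $H(p)=\partial_t(\log f)(t_0,p)$ (noted right after Proposition \ref{spacelikecharac}). The maximality condition $H\equiv 0$ on $M$ thus translates into $\partial_t f(t_0,p)=0$ for every $p\in M$, and conversely every such constant clearly satisfies the equation.

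The main obstacle is genuinely conceptual rather than computational: one must recognize that the non-strict expanding/contracting hypotheses in $(i)$--$(iii)$ suffice because the product $(\partial_t f)\cdot H$ is automatically zero for a maximal hypersurface, so no strict-sign assumption is actually needed in the application of Lemma \ref{signLaplacian}. The analogous adaptation of the transition-slice argument in case $(iii)$ is the most delicate step, since $\partial_t f$ changes sign; the trick is to pair this sign change with the sign of $\tau-t_1$ via the square $(\tau-t_1)^2$, exactly as in the proof of Theorem \ref{CharactMaximal1}$(ii)$.
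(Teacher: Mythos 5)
Your proposal is correct and follows essentially the same route as the paper, which presents this theorem as a direct reformulation of Theorem \ref{CharactMaximal1} via the graph mean curvature equation of Proposition \ref{Propmeancurvequation}. Your explicit treatment of the non-strict sign hypotheses --- noting that $(\partial_t f)\cdot H=0$ automatically for a maximal graph in cases (i)--(ii), and running the $(\tau-t_1)^2$ argument in case (iii) --- correctly fills in details the paper leaves implicit.
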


\section{Appendix}

We devote this Appendix to compile elementary results within the area of conformal geometry
which are useful for this work. Specifically, given a non-degenerate submanifold
in a pseudo-Riemannian background, we show the relation between the
connections, the Weingarten operators, the mean curvatures and the Laplacians associated to two conformally
related metrics. Let us first introduce the concept of conformal transformation.
\begin{Def}
	Let $(M,g_1)$ be a pseudo-Riemannian manifold of dimension $n=\mathrm{dim}M \geq 2$. A conformal transformation
	is defined as a transformation of the metric tensor of the form
	\begin{equation*}
		g_1 \rightarrow g_2=e^{2\phi}g_1, 
	\end{equation*}
where $\phi\in C^\infty(M)$.
\end{Def}
In the particular case where $g_1$ has Lorentzian signature, these transformations preserve the angles between
geometrical objects, as well as null geodesics and light cones. The following result establishes the preservation of  
non-degenerate totally umbilical submanifolds in pseudo-Riemannian contexts  
under the action of a global conformal transformation.    
\begin{Prop}
	\label{Propconformaltransf}
	Let $\psi:\Sigma\rightarrow (M,g_1)$ be a non-degenerate totally umbilic submanifold of arbitrary codimension immersed in a pseudo-Riemannian manifold $(M,g_1)$. Let us consider the conformal transformation defined by 
	\begin{equation}
		\label{conformaltransf}
		g_2=e^{2\phi} \,g_1, 
	\end{equation}
	with $\phi\in C^\infty(M)$.
	Then $\Hyp$ is totally umbilical in $M$ when is endowed with the metric $g_2$.   
\end{Prop}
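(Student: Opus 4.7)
The plan is to exploit the standard conformal-change formula for the Levi-Civita connection and then take the normal projection along $\Sigma$. Denote by $\nabla^{1}$ and $\nabla^{2}$ the Levi-Civita connections of $g_{1}$ and $g_{2}=e^{2\phi}g_{1}$, and by $\nabla^{g_{1}}\phi$ the $g_{1}$-gradient of $\phi$. The first step is to recall the classical identity
$$\nabla^{2}_{X}Y=\nabla^{1}_{X}Y+X(\phi)\,Y+Y(\phi)\,X-g_{1}(X,Y)\,\nabla^{g_{1}}\phi$$
valid for all $X,Y\in\mathfrak{X}(M)$, together with the trivial but crucial observation that $g_{1}$-orthogonality and $g_{2}$-orthogonality of vectors coincide, so the normal bundle $T\Sigma^{\perp}$ and the associated projection $(\cdot)^{\perp}$ do not depend on which of the two metrics is used.

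Taking $X,Y\in\mathfrak{X}(\Sigma)$, the terms $X(\phi)\,Y$ and $Y(\phi)\,X$ are tangent to $\Sigma$ and therefore drop out upon extracting the normal component. Denoting by $\vec{\sff}^{\,i}$ the second fundamental form vector of $\Sigma$ with respect to $g_{i}$, this immediately yields
$$\vec{\sff}^{\,2}(X,Y)=\vec{\sff}^{\,1}(X,Y)-g_{1}(X,Y)\,(\nabla^{g_{1}}\phi)^{\perp}.$$
If $\Sigma$ is totally umbilical with respect to $g_{1}$, then $\vec{\sff}^{\,1}(X,Y)=g_{1}(X,Y)\,\vec{H}^{\,1}$ for some normal vector field $\vec{H}^{\,1}$ along $\Sigma$, and substituting this together with $g_{1}=e^{-2\phi}g_{2}$ gives
$$\vec{\sff}^{\,2}(X,Y)=g_{2}(X,Y)\,e^{-2\phi}\bigl(\vec{H}^{\,1}-(\nabla^{g_{1}}\phi)^{\perp}\bigr),$$
which exhibits $\vec{\sff}^{\,2}$ as a pointwise scalar (normal-vector) multiple of $g_{2}$; this is precisely the definition of total umbilicity of $\Sigma$ in $(M,g_{2})$, and identifies the new umbilicity vector as $e^{-2\phi}(\vec{H}^{\,1}-(\nabla^{g_{1}}\phi)^{\perp})$.

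The only genuine bookkeeping to worry about is the consistency of the tangent/normal decomposition under the two metrics; but as noted above, since $g_{2}$ is a pointwise positive multiple of $g_{1}$ on $TM$, the orthogonal complements of $T\Sigma$ coincide and no difficulty arises. The argument works uniformly in the codimension and the signature: the non-degeneracy hypothesis on $\Sigma$ is used implicitly only to ensure that $T\Sigma$ and $T\Sigma^{\perp}$ are genuine complementary subbundles, and this property is conformally invariant. Thus the expected main obstacle is essentially bookkeeping rather than a substantive difficulty.
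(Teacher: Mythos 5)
Your proof is correct and follows essentially the same route as the paper: both start from the conformal transformation formula for the Levi-Civita connection and extract the component normal to $\Sigma$. The only (cosmetic) difference is that you phrase the conclusion via the vector-valued second fundamental form $\vec{\sff}(X,Y)=(\nabla_XY)^{\perp}$, which handles arbitrary codimension in one stroke, whereas the paper computes the shape operator $A_2$ associated to a rescaled unit normal $N_2=e^{-\phi}N_1$ and checks it is proportional to the identity.
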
        

\begin{proof}
	Let us denote by $\nabla^{(2)}$ the Levi-Civita connection of the metric $g_2$.
	For any $X,Y\in \mathfrak{X}(\Sigma)$,
	$\nabla^{(2)}$ is related to the ambient connection $\nabla^{(1)}$ by the following formula (see for instance \cite{Dajczer}, pg. 132):
	\begin{equation}
		\label{conrelation}
		\nabla^{(2)}_X Y=\nabla^{(1)}_X Y+g_1( X, \nabla^{(1)} \phi ) Y+g_1( Y, \nabla^{(1)} \phi ) X- g_1( X,Y ) \nabla^{(1)} \phi.
	\end{equation}
    Given any unit normal $N_1$ to $\Hyp$ in $(M,g_1)$, relation (\ref{conrelation}) can be applied to compute the covariant derivative of the associated unit normal $N_2=e^{-\phi} N_1$ in $(M,g_2)$ in terms of the geometry determined by $g_1$, 
	\begin{equation*}
		\nabla^{(2)}_X N_2= e^{-\phi} \left( \nabla^{(1)}_X N_1+g_1(N_1, \nabla^{(1)} \phi)X \right).
	\end{equation*} 
    Hence, the relation between the shape operators $A_1$ and $A_2$ associated to $N_1$ and $N_2$ becomes
	\begin{equation}
		\label{shaperelation2}
		A_2(X)=e^{-\phi}\left(A_1(X)-g_1(N_1,\nabla^{(1)} \phi)\right). 
	\end{equation}
    The shape operator of the totally umbilic non-degenerate submanifold $\psi: \Sigma\rightarrow (M,g_1)$ reads $A_1(X)=-H_1\textrm{Id}$, with $H_1$ the mean curvature of $\psi:\Hyp\rightarrow (M,g_1)$ with respect to $N_1$,
	so formula (\ref{shaperelation2}) becomes in this case
	\begin{equation*}
		A_2(X)=e^{-\phi}\left(-H_1-g_1(N_1,\nabla^{(1)} \phi)\right)\textrm{Id}(X).  
	\end{equation*}
\end{proof}

\begin{Cor}
	\label{charactmaximal}
	Under the conformal transformation (\ref{conformaltransf}), the mean curvature function $H_2$ of $\psi:\Sigma\rightarrow (M,g_2=e^{2\phi}g_1)$ with respect to $N_2$ becomes 
	\begin{equation}
		\label{confmeancurvature}
		H_2=e^{-\phi}\left(H_1+g_1(N_1,\nabla^{(1)} \phi)\right). 
	\end{equation}
	Moreover, if the mean curvature $H_1$ of the hypersurface $\psi:\Sigma\rightarrow (M,g_1)$ satisfies the equation
	\begin{equation*}
		H_1=-g_1(N_1,\nabla^{(1)} \phi),
	\end{equation*}
	then $H_2=0$. 
\end{Cor}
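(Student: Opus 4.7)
The plan is to read off the mean curvature formula directly from the shape–operator identity (\ref{shaperelation2}) that was established inside the proof of Proposition \ref{Propconformaltransf}. The key observation is that the derivation of $\nabla^{(2)}_X N_2=e^{-\phi}\left(\nabla^{(1)}_X N_1+g_1(N_1,\nabla^{(1)}\phi)X\right)$, and hence of
\begin{equation*}
A_2(X)=e^{-\phi}\left(A_1(X)-g_1(N_1,\nabla^{(1)}\phi)X\right),
\end{equation*}
never invoked the totally umbilic hypothesis; it used only the conformal relation (\ref{conrelation}) between the ambient Levi-Civita connections together with the rescaling $N_2=e^{-\phi}N_1$. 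So this identity is available for an arbitrary non-degenerate hypersurface, which is exactly the generality the corollary requires.

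From there, I would take the pointwise trace of $A_2$, viewing it as a $(1,1)$-tensor so the trace is independent of whichever metric is used to represent it. Since $\mathrm{trace}(\mathrm{Id})=n=\dim\Sigma$, the identity above yields
\begin{equation*}
\mathrm{trace}(A_2)=e^{-\phi}\bigl(\mathrm{trace}(A_1)-n\,g_1(N_1,\nabla^{(1)}\phi)\bigr).
\end{equation*}
Applying the sign convention $H_i=-\tfrac{1}{n}\mathrm{trace}(A_i)$ adopted in Section \ref{spacelikesection} then produces exactly $H_2=e^{-\phi}\bigl(H_1+g_1(N_1,\nabla^{(1)}\phi)\bigr)$, which is (\ref{confmeancurvature}).

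The second assertion is then immediate: substituting the hypothesis $H_1=-g_1(N_1,\nabla^{(1)}\phi)$ into the formula for $H_2$ gives $H_2=e^{-\phi}\cdot 0=0$. There is essentially no obstacle in this argument; the only point deserving attention is the sanity check that the shape-operator identity (\ref{shaperelation2}) really does hold without assuming umbilicity, so that it can be legitimately re-used here in full generality, and that the sign convention on $H$ is applied consistently in passing from the trace of $A$ to the mean curvature.
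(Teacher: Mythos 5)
Your proposal is correct and follows exactly the route the paper intends: the corollary is stated without a separate proof precisely because it is the trace of identity (\ref{shaperelation2}), which—as you rightly note—is derived in the proof of Proposition \ref{Propconformaltransf} before the umbilicity hypothesis is ever used, so it holds for an arbitrary non-degenerate hypersurface. Your bookkeeping of the sign convention $H_i=-\tfrac{1}{n}\mathrm{trace}(A_i)$ and the normalization $N_2=e^{-\phi}N_1$ is accurate, and the second assertion is indeed immediate by substitution.
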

		
 The relation between conformal Laplacians can be easily found in the literature
(see for example \cite{Besse}). For the sake of completeness of this text, we add in this Appendix a proof for this result.   
\begin{Prop}
Let $(M,g_1)$ be a pseudo-Riemannian manifold of dimension $n=\mathrm{dim}\,M \geq 2$.
Under the conformal transformation determined by 
$g_2=e^{2\phi}g_1$ for some $\phi\in C^\infty(M)$, the associated Laplacians  $\triangle_{(1)}$ and $\triangle_{(2)}$ satisfy the relation
\begin{equation}
	\label{conformallaplacianrelation}
	\triangle_{(2)}h=e^{-2\phi}(\triangle_{(1)}h+(n-2)g_1(\nabla^{(1)} \phi, \nabla^{(1)} h) )
\end{equation}
for any $h\in C^\infty(M)$.
\end{Prop}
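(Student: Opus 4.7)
The plan is to prove the identity by writing $\triangle_{(i)} h$ as the divergence of the gradient with respect to the corresponding metric and tracking how each operator transforms under the rescaling $g_2 = e^{2\phi} g_1$. The calculation is essentially two applications of the Leibniz rule on the conformal weights.

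First I would compute how the gradient transforms. Since the musical isomorphism is determined by the inverse metric, and $g_2^{ij} = e^{-2\phi}\, g_1^{ij}$, we immediately obtain
\begin{equation*}
\nabla^{(2)} h = e^{-2\phi}\, \nabla^{(1)} h.
\end{equation*}
Next I would compute how the divergence of an arbitrary vector field $X$ transforms. Using the standard formula $\div_{(i)}(X) = |\det g_i|^{-1/2} \partial_k\bigl(|\det g_i|^{1/2} X^k\bigr)$ together with $|\det g_2|^{1/2} = e^{n\phi} |\det g_1|^{1/2}$, differentiation of the factor $e^{n\phi}$ yields
\begin{equation*}
\div_{(2)}(X) = \div_{(1)}(X) + n\, g_1(\nabla^{(1)}\phi, X).
\end{equation*}

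The proof is then a one-line computation: I apply the divergence identity to $X = \nabla^{(2)} h = e^{-2\phi}\nabla^{(1)} h$ and use the Leibniz rule for $\div_{(1)}$ acting on a scalar multiple,
\begin{equation*}
\triangle_{(2)} h = \div_{(2)}\bigl(e^{-2\phi}\nabla^{(1)}h\bigr) = e^{-2\phi}\triangle_{(1)} h - 2 e^{-2\phi} g_1(\nabla^{(1)}\phi,\nabla^{(1)}h) + n\, e^{-2\phi} g_1(\nabla^{(1)}\phi,\nabla^{(1)}h),
\end{equation*}
and the coefficients collapse to the stated $(n-2)$.

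There is no real obstacle here; the only care needed is with signs and with the dimension-dependent factor $n$ coming from $|\det g_2|^{1/2} = e^{n\phi}|\det g_1|^{1/2}$, which combines with the $-2$ produced by differentiating $e^{-2\phi}$ to give the crucial $(n-2)$ factor. An alternative route, equally short, would be to invoke the conformal change of connection formula (\ref{conrelation}) already used in the proof of Proposition \ref{Propconformaltransf}: taking the trace of $\mathrm{Hess}^{(2)} h$ with respect to $g_2$ and subtracting a trace with respect to $g_1$ produces exactly the same correction term, with the $n$ appearing from the trace of the identity and the $-2$ from the pointwise symmetry of the two $\nabla^{(1)}\phi$–terms in (\ref{conrelation}).
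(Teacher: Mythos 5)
Your proposal is correct and is essentially the paper's own argument: both rest on the coordinate identity $\triangle h=|\det g|^{-1/2}\partial_i\bigl(|\det g|^{1/2}(\nabla h)^i\bigr)$ together with $|\det g_2|^{1/2}=e^{n\phi}|\det g_1|^{1/2}$ and $\nabla^{(2)}h=e^{-2\phi}\nabla^{(1)}h$, the only difference being that you package the Leibniz expansion into separate gradient and divergence transformation rules while the paper expands everything in one display. The signs and the $(n-2)$ coefficient come out correctly in your computation.
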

		
\begin{proof}
Let $\{x^i\}_{i=1}^{n}$ be a system of local coordinates on $M$.
The Laplacian of $h$ with respect to $g_2$ can be computed by the well-known formula (read for instance \cite{TorresdelCastillo}) 
	\begin{equation}
		\label{lapconforme}
		\triangle_{(2)}h=\frac{1}{\sqrt{\det g_2}}\sum_{i=1}^n \partial_i\left(\sqrt{\det g_2} \, (\nabla^{(2)} h)^i \right),
	\end{equation} 
where $(\nabla^{(2)} h)^i$ stands for the i-component of $\nabla^{(2)} h$ in the associated coordinate basis $\{\partial/\partial x^i \}_{i=1}^n$. The determinants of both metrics satisfy $\mathrm{det}g_2=e^{2\phi n}\mathrm{det}g_1$, which inserted into (\ref{lapconforme}) gives
\begin{eqnarray*}
	\triangle_{(2)}	h&=&\frac{1}{e^{\phi n}\sqrt{\mathrm{det}g_1}}\sum_{i=1}^n \partial_i\left(e^{\phi(n-2)}\sqrt{\mathrm{det}g_1}\,\, (\nabla^{(1)} h)^i \right) \\
	&=& \frac{1}{e^{\phi n}\sqrt{\mathrm{det}g_1}}\sum_{i=1}^n e^{\phi(n-2)}\left( (n-2)\partial_i \phi \sqrt{\mathrm{det}g_1}\,\,(\nabla^{(1)} h)^i +\partial_i (\sqrt{\mathrm{det}g_1}\,\,(\nabla^{(1)} h)^i)   \right) \\
	&=& e^{-2\phi}(\triangle_{(1)}h+(n-2)g_1(\nabla^{(1)} \phi, \nabla^{(1)} h) ).
\end{eqnarray*}

\end{proof}

\end{document}